\documentclass[reqno,11pt]{amsart}
\usepackage{amsmath}
\usepackage{amssymb}
\usepackage{tabularx}
\usepackage{enumerate}
\usepackage[dvips]{psfrag,graphicx}
\usepackage{color}
\usepackage{subfigure}
\usepackage{cases}
\usepackage{bbm}

\topmargin=-0.7in \hoffset=-16.5mm \voffset=2cm \textheight=220mm
\textwidth=160mm

\usepackage{mathrsfs}
\usepackage{empheq}
\usepackage{amsfonts,amssymb,amsmath}
\usepackage{epsfig}

\usepackage{comment}
 
%%%%%%%%%%%%%%%%%
%%%%%%%%%%%%%%%%
\makeatletter
\@addtoreset{equation}{section}
  
\makeatother
%%%%%%% 
\newtheorem{thm}{Theorem}[section]
\newtheorem{lem}[thm]{Lemma}

\theoremstyle{definition}

\theoremstyle{definition}

%\newcommand{\R}{\mathbb{R}}
%\newcommand{\ve}{\varepsilon}
%\newcommand{\wt}{\widetilde} 
%\newcommand{\wh}{\widehat}

%%%%%%%%%
\begin{document}
\title[Large time behavior of solutions to a cooperative
model]
{Large time behavior of solutions to a cooperative
model with population flux by attractive transition}
 \thanks{This research was
partially supported by JSPS KAKENHI Grant Number 22K03379.}
\author[R. Kato]{Ryuichi Kato$^\dag$}
\author[K. Kuto]{Kousuke Kuto$^\ddag$}
\thanks{$\dag$ Department of Pure and Applied Mathematics, 
Graduate School of Fundamental Science and Engineering,
Waseda University, 
3-4-1 Ohkubo, Shinjuku-ku, Tokyo 169-8555, Japan.}
\thanks{$\ddag$ Department of Applied Mathematics, 
Waseda University, 
3-4-1 Ohkubo, Shinjuku-ku, Tokyo 169-8555, Japan.}
\thanks{{\bf E-mail:} \texttt{kuto@waseda.jp}}
\date{\today}

\begin{abstract} 
This paper is concerned with 
a diffusive Lotka-Volterra cooperative model
with population flux by attractive transition.
We study the time-global well-posedness 
and the large time behavior of solutions in
a case where the habitat is a bounded convex 
domain and random diffusion rates equal to each other.
A main result shows that when the spatial dimension is less than or equal to 3, under a weaker cooperative condition, a classical solution exists 
globally in time if the initial data belongs to a suitable functional space. 
Furthermore, it is shown that if there exists a positive steady state
and the equal random diffusion rate is sufficiently large, then
all positive solutions asymptotically approach the positive 
steady state as time tends to infinity.
\end{abstract}

\subjclass[2020]{35A01, 35B45, 35K51, 35B35, 92D25}
\keywords{nonlinear diffusion,
population model, 
parabolic system,
global existence,
absorbing set,
stability}
\maketitle

\section{Introduction}
In this paper, we are concerned with the following
diffusive Lotka-Volterra cooperative model
with strongly coupled nonlinear diffusion terms:
\begin{equation}\label{para}
\begin{cases}
u_{t}=d_{1}\Delta u+\alpha\nabla\cdot\biggl[
v^{2}\,\nabla\biggl(\dfrac{u}{v}\biggr)\biggr]
+u(a_{1}-b_{1}u+c_{1}v),\ \ \ 
&(x,t)\in\Omega\times (0,T),\vspace{1mm} \\
v_{t}\,=d_{2}\Delta v +\beta\nabla\cdot\biggl[
u^{2}\,\nabla\biggl(\dfrac{v}{u}\biggr)\biggr]
+v(a_{2}+b_{2}u-c_{2}v),\ \ \ 
&(x,t)\in\Omega\times (0,T), \vspace{1mm} \\
\dfrac{\partial u}{\partial\nu}=
\dfrac{\partial v}{\partial\nu}=0,\ \ \ 
&(x,t)\in\partial\Omega\times (0,T),\vspace{1mm} \\
u(x,0)=u_{0}(x)\ge 0,\ \ 
v(x,0)=v_{0}(x)\ge 0,\ \ \ 
&x\in\Omega,
\end{cases}
\end{equation}  
where $\Omega\,(\subset\mathbb{R}^{N})$
is a bounded domain with smooth boundary $\partial\Omega$, 
corresponding to the habitat of two species in a symbiotic relationship
with each other. 
The unknown functions $u(x,t)$ and $v(x,t)$ 
represent the population densities 
of the two species
at location $x\in\Omega$ and time $t>0$. 
The homogeneous Neumann boundary conditions will be imposed so
that 
each species has zero derivative in the direction of 
the outward unit normal vector $\nu$ on the boundary 
$\partial\Omega$ of the habitat $\Omega$.
The coefficients $a_i$ $(i=1,2)$ 
in reaction terms of the Lotka-Volterra type are
real constants that can be negative, 
whereas $b_i$ and $c_i$ are positive constants. 
The coefficients $a_i$ are corresponding to the growth rate of each species, 
and $b_1$ and $c_2$ are determined from 
the carrying capacity of environment 
for each species. Here $c_1$ and $b_2$ represent 
the degree of interaction due to the symbiosis between different species. 
Various examples of such symbiotic pairs of species are known,
e.g. 
the symbiotic relationship between ants and aphids is often cited.
The coefficient $d_i$
$(i=1,2)$ in the linear part of diffusion terms 
correspond to the the random diffusion rate of each individual
of the species.
The strongly coupled nonlinear diffusion terms 
$\nabla\cdot [v^{2}\nabla (u/v)]$ and
$\nabla\cdot [u^{2}\nabla (v/u)]$ are
expressed as in the following forms:
\begin{equation}\label{fd}
\begin{split}
&
\nabla\cdot\biggl[
v^{2}\,\nabla\biggl(\dfrac{u}{v}\biggr)\biggr]=
\nabla\cdot
(v\nabla u-u\nabla v)=v\Delta u-u\Delta v, \\
&
\nabla\cdot\biggl[
u^{2}\,\nabla\biggl(\dfrac{v}{u}\biggr)\biggr]=
\nabla\cdot
(u\nabla v-v\nabla u)=u\Delta v-v\Delta u.
\end{split}
\end{equation}
These terms
describe an ecological situation where
each individual of species migrates with higher probability 
to location with higher density of the other species.
From the perspective of the diffusion process in ecology,
such strongly coupled diffusion terms in \eqref{fd} 
microscopically model
a situation where the transition probability of
each individual of each species depends on the density
of the other species at the point of arrival
(Okubo and Levin
\cite[Section 5.4]{OL}).
When modeling the diffusion process of symbiotic relationship,
it makes sense to set such a population flux by attractive transition
to both species.

In terms of the mechanisms of ecological diffusion process, 
the nonlinear diffusion as in \eqref{fd} is as typical 
as so-called cross-diffusion $\Delta (uv)$ and 
chemotaxis $\nabla\cdot (u\nabla v)$. 
Actually, in the well-known monograph \cite{OL}, the above three types 
of diffusion processes are described in parallel. 
In terms of partial differential equations, 
the cross-diffusion and the chemotaxis have been studied pure mathematically 
through the Shigesada-Kawasaki-Teramoto model and the Keller-Segel model, 
respectively, while the nonlinear diffusion as in \eqref{fd}
has not been studied pure mathematically much. 
In this sense, the purpose of this paper is to 
form the basis for mathematical prescriptions 
to deal with the nonlinear diffusion through the analysis of 
the cooperative model \eqref{para}.

The following are studies from the viewpoint of partial differential equations 
for the strongly coupled nonlinear diffusion term such as \eqref{fd}:
For the predator-prey model with 
$\beta=0$ and  $b_{2}<0$ in \eqref{para},
Heihoff and Yokota \cite{HY} established the global solvability in time 
if $N\le 3$ and derived
the convergence of solutions to a positive steady state as $t\to\infty$
with some additional conditions.
For the predator-prey model, the stationary problem was also discussed 
by Oeda and the second author \cite{OK, KO} 
under homogeneous Dirichlet boundary conditions. 
For \eqref{para} itself, Adachi and the second author \cite{AK} 
obtained the bifurcation structure of 
the non-constant positive steady states and 
the asymptotic behavior of the steady states as
$\alpha$, $\beta\to\infty$ 
under a weak cooperative condition.

Based on the existing studies mentioned above, 
in this paper we study the solvability of \eqref{para}. 
As for the time-local solvability, 
we can resort to the unique existence theorem for 
time-local classical solutions 
%if $u_{0}$ and $v_{0}$ belong to a suitable functional space
established by the series of papers by Amann 
\cite{Am1, Am2, Am3, Am4} 
that 
considered the solvability of a class of 
quasilinear parabolic equations
including \eqref{para}.
Actually, by virtue of \eqref{fd}, the parabolic equations
of \eqref{para} are represented as the following divergence form:
$$
\biggl[
\begin{array}{c}
u_{t}\\
v_{t}
\end{array}
\biggr]
=\sum^{N}_{j=1}
\dfrac{\partial}{\partial x_{j}}\biggl(
A(u,v)\dfrac{\partial}{\partial x_{j}}
\biggl[
\begin{array}{c}
u\\
v
\end{array}
\biggr]\biggr)
+
\biggl[
\begin{array}{c}
u(a_{1}-b_{1}u+c_{1}v)\\
v(a_{2}+b_{2}u-c_{2}v)
\end{array}
\biggr],
$$
and all the eigenvalues of
$$
A(u, v):=
\biggl[
\begin{array}{cc}
d_{1}+\alpha v & -\alpha u\\
-\beta v & d_{2}+\beta u
\end{array}
\biggr]
$$
are contained in $\{\,\lambda\in\mathbb{C}\,:\,\mbox{Re}\,\lambda>0\,\}$
for any nonnegative $u$ and $v$.
Then it is possible to check all the conditions
for use of \cite[Theorem on p.17]{Am3} to 
know the following unique existence of
time-local classical solutions of \eqref{para}:
\begin{thm}[\cite{Am3}]\label{Amannthm}
Assume that $u_{0}$ and $v_{0}$ are nonnegative functions belonging to
$W^{1,\infty}(\Omega)$.
Then there exist $T_{{\rm max}}\in (0,\infty]$ and nonnegative 
$$(u,v)\in [\,C(\overline{\Omega}\times [0,T_{{\rm max}}))\cap
C^{2,1}(\overline{\Omega}\times (0,T_{{\rm max}}))\,]^{2}
$$
which solves \eqref{para} classically in
$\overline{\Omega}\times (0,T_{{\rm max}})$.
Furthermore,
if $T_{{\rm max}}<\infty$, then
\begin{equation}\label{Amanncri}
\limsup_{t\nearrow T_{{\rm max}}}\left(
\|u(\,\cdot\,,t)\|_{W^{1,p}(\Omega )}+
\|v(\,\cdot\,,t)\|_{W^{1,p}(\Omega )}\right)=
\infty
\quad\mbox{for all}\ p>N.
\end{equation}
\end{thm}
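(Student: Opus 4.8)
Since Theorem~\ref{Amannthm} is quoted from Amann's general theory, the task is to verify that \eqref{para}, recast in the divergence form displayed above, meets the structural hypotheses of \cite[Theorem on p.17]{Am3}. I would first note that in that form the coefficient matrix $A(u,v)$ and the reaction vector $f(u,v)=(u(a_{1}-b_{1}u+c_{1}v),\,v(a_{2}+b_{2}u-c_{2}v))^{T}$ are polynomial, hence $C^{\infty}$, functions on all of $\mathbb{R}^{2}$. The boundary operator naturally attached to the divergence form is the conormal derivative $w\mapsto A(u,v)\,\partial_{\nu}w$ with $w=(u,v)^{T}$; since $\det A(u,v)=d_{1}d_{2}+d_{1}\beta u+d_{2}\alpha v>0$ for $u,v\ge 0$, the matrix $A(u,v)$ is invertible on the physically relevant range, and therefore the conormal condition $A(u,v)\,\partial_{\nu}w=0$ on $\partial\Omega$ is equivalent to the homogeneous Neumann condition $\partial u/\partial\nu=\partial v/\partial\nu=0$ in \eqref{para}. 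Hence \eqref{para} is exactly a quasilinear parabolic system with conormal boundary conditions of the class treated in \cite{Am1,Am2,Am3,Am4}.

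Next I would confirm normal ellipticity uniformly on $\{u,v\ge 0\}$: one has $\operatorname{tr}A(u,v)=d_{1}+d_{2}+\alpha v+\beta u\ge d_{1}+d_{2}>0$ and $\det A(u,v)>0$ as above, and a real $2\times 2$ matrix with positive trace and positive determinant has both eigenvalues in $\{\operatorname{Re}\lambda>0\}$, which is the spectral property already recorded in the text. Thus $A(u,v)$ is normally elliptic, uniformly for $(u,v)$ in compact subsets of the closed first quadrant, and---the conormal boundary operator being of Neumann type---the complementing (Lopatinskii--Shapiro) condition of Amann's framework holds as it does for the scalar Neumann problem. Since $u_{0},v_{0}\in W^{1,\infty}(\Omega)\hookrightarrow W^{1,p}(\Omega)$ for every finite $p$, and $W^{1,p}(\Omega)\hookrightarrow C(\overline{\Omega})$ for $p>N$, the data lie in an admissible trace space; invoking \cite[Theorem on p.17]{Am3} together with the regularity results of \cite{Am1,Am2,Am4} produces a maximal time $T_{\mathrm{max}}\in(0,\infty]$, a unique solution with the stated regularity $C(\overline{\Omega}\times[0,T_{\mathrm{max}}))\cap C^{2,1}(\overline{\Omega}\times(0,T_{\mathrm{max}}))$, and the blow-up alternative \eqref{Amanncri}.

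It remains to obtain nonnegativity of $(u,v)$. The reaction term is quasi-positive, since $f_{1}(0,v)=0$ and $f_{2}(u,0)=0$; moreover each off-diagonal entry of $A$ carries a factor $u$ or $v$, so on the face $\{u=0\}$ (resp.\ $\{v=0\}$) the first (resp.\ second) equation degenerates to a scalar uniformly parabolic equation for that component with nonnegative initial datum and without negative zeroth-order forcing wherever the component vanishes. I would use this to show that the nonnegative cone is positively invariant under the semiflow---equivalently, apply the parabolic maximum principle to each equation with the other component regarded as a known coefficient on a short time interval and then propagate nonnegativity up to $T_{\mathrm{max}}$ by the continuation criterion.

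I expect the nonnegativity step to be the main obstacle, because the strongly coupled terms $-\alpha u\,\Delta v$ and $-\beta v\,\Delta u$ obstruct a naive componentwise comparison argument; the resolution is precisely that each of these terms vanishes together with the component whose sign is being controlled, so that the coupling disappears on the boundary of the cone. A secondary, more bookkeeping-type point is to match the precise Sobolev--Slobodeckij/Besov trace scale used in \cite{Am3} with the $W^{1,\infty}$ hypothesis on $(u_{0},v_{0})$; this is also why \eqref{Amanncri} is phrased in terms of $W^{1,p}$ with $p>N$.
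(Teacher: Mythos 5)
Your verification is correct and follows essentially the same route as the paper, which likewise recasts \eqref{para} in the divergence form with coefficient matrix $A(u,v)$, checks that its spectrum lies in $\{\operatorname{Re}\lambda>0\}$ for $u,v\ge 0$, and then invokes \cite[Theorem on p.17]{Am3}; the paper offers no further proof beyond this citation. Your additional remarks on the conormal boundary condition, the trace spaces, and the preservation of the nonnegative cone (via quasi-positivity of the reaction and the vanishing of the coupling terms $-\alpha u\,\Delta v$, $-\beta v\,\Delta u$ on the faces of the cone) are sound and simply make explicit what the paper delegates to Amann's framework.
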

Once the time-local well-posedness has been established,
as in Theorem \ref{Amannthm}, one is naturally interested
in the time-global solvability.
%In this sense, the time-global solvability of \eqref{para}
%remains one important problem.
It should be noted that concerning 
cooperative models such as \eqref{para}, 
even in the corresponding ODE system
with $d_1=d_2 =\alpha =\beta=0$, 
there exist solutions that blow up in finite times 
under the strong cooperative condition $b_{1}c_{2}<b_{2}c_{1}$.
With homogeneous Neumann boundary conditions, 
such solutions of the ODE system
are also solutions of \eqref{para}.
Then it follows that
\eqref{para} possesses finite time
blow-up solutions when $b_{1}c_{2}<b_{2}c_{1}$. 
On the other hand, 
under the weak cooperative condition 
$b_{1}c_{2}>b_{2}c_{1}$, 
it is known that all positive solutions of 
the linear diffusive system with $\alpha=\beta=0$
are global in time and uniformly bounded 
(\cite{Go}, \cite{We}).
It is natural to ask whether
even with the strongly coupled diffusion terms as \eqref{fd}, 
all the classical solutions of 
\eqref{para} still exist globally in time
under the same weak 
cooperative condition.

The first main result of this paper 
will show that under a {\it weaker} cooperative condition involving
$\gamma:=\alpha /\beta$
(which leads to $b_{1}c_{2}>b_{2}c_{1}$), 
if $d_{1}=d_{2}$ and the habitat $\Omega$ is a bounded convex domain  
of spatial dimension $3$ or less, 
then all classical solutions exist globally in time.
Furthermore, in this case, the existence of
an $L^{\infty}(\Omega )$ absorbing set will be shown.
From the point of view of Theorem \ref{Amannthm}, 
for any solution $(u,v)$,
if it can be shown that 
$\|u(\,\cdot\,,t)\|_{W^{1,4}(\Omega )}+\|v(\,\cdot\,,t)\|_{W^{1,4}(\Omega)}$ 
remains bounded for $t\in (0,T_{{\rm max}})$, 
the time-global solvability of the solution 
(i.e. $T_{{\rm max}}=\infty$) can be obtained.
In order to establish the $W^{1,4}(\Omega )$ a priori estimate, 
we first derive the $L^{\infty}(\Omega )$ a priori estimate by
the comparison argument for a semilinear parabolic
equation with unknown function $w:=u+\gamma v$, and then,
combine this $L^{\infty}(\Omega )$ estimate with the 
maximum regularity and the energy estimate for $u$ and $v$.
For the energy estimate, we refer to ideas 
developed by Lou and Winkler \cite{LW} 
in a priori estimates of all the solutions to 
the Shigesada-Kawasaki-Teramoto model with equal random diffusion.

In the weak cooperative case where
$b_{1}c_{2}>b_{2}c_{1}$, 
it is known that
all positive solutions of
the linear diffusive system with $\alpha=\beta=0$
converge to the positive steady state
as $t\to\infty$ if it exists (e.g. \cite{We}).
Under the same conditions as in the first main result, the second
main result will show that if the positive constant steady state 
$(u^{*}, v^{*})$ exists, 
then it is globally asymptotically stable in the sense that
all positive solutions of \eqref{para}
asymptotically approach $(u^{*}, v^{*})$ as $t\to\infty$
if the equal random diffusion rate $d\,(\,=d_{1}=d_{2})$ 
is sufficiently large.
The proof is based on the construction of a Lyapunov function.
On the other hand, if $d$ is sufficiently small with the above setting, 
$(u^{*}, v^{*})$ can no longer be globally asymptotically stable, 
since the result by \cite{AK} implies the existence of 
nonconstant positive steady states 
due to the effect of the strongly coupled nonlinear diffusion terms. 

This paper is organized as follows: 
Section 2 presents the main results. 
Section 3 proves the first main result, which ensures
the existence of the $L^{\infty}(\Omega )$
absorbing set of all positive solutions. 
Section 4 proves the second main result, 
which states the global asymptotic stability of 
the positive constant steady state. 
Section 5 discusses the relevance of the global bifurcation structure 
of positive steady states obtained by \cite{AK} 
to the behavior of the nonstationary solutions obtained in
Sections 3 and 4.

\begin{comment}
In this paper, the eigenvalue problem of the Laplace equation:
$$
-\Delta u=\lambda u\quad\mbox{in}\ \Omega,\qquad
\partial_{\nu}u=0
\quad\mbox{on}\ \partial\Omega
$$
will play an important role.
We denote the all eigenvalues by
$$0=\lambda_{0}<\lambda_{1}\le\lambda_{2}\le\cdots\le\lambda_{j}\le\cdots$$
counting multiplicity and
denote by
$\varPhi_{j}$ any $L^{2}(\Omega )$ normalized eigenfunciton
corresponding to the eigenvalue $\lambda_{j}$.
Furthermore, the usual norm of $L^{p}(\Omega )$ will be denoted by
$$
\|u\|_{p}:=\biggl(
\int_{\Omega}|u(x)|^{p}\biggr)^{1/p}\quad\mbox{if}\ 1\le p<\infty$$
and
$\|u\|_{\infty}=\sup_{x\in\Omega}|u(x)|$.
As typical functional spaces for the
bifurcation analysis concerning \eqref{st}, 
we define
$$X_{p}:=W^{2,p}_{\nu }=\{\,u\in W^{2,p}(\Omega )\,:\,
\partial_{\nu}u=0\ \mbox{on}\ \partial\Omega\,\}$$
and
$$\boldsymbol{X}_{p}:=X_{p}\times X_{p},\quad
\boldsymbol{Y}_{p}:=L^{p}(\Omega )\times L^{p}(\Omega )$$
for any $p>N$.
\end{comment}
%%%%%%%%%%%%%%%%%%%%%%%%%%%%%%%%%%%%%%%%%%%%%%%%%%%%%%%%%%%%%%%%%%%%
\section{Main results}
In this section, we state main results of this paper.
The first result asserts that in any convex domain $\Omega$ 
of space $3$ dimensions or less, 
all the solutions guaranteed by Theorem \ref{Amannthm} 
exist globally in time, i.e., $T_{{\rm max}}=\infty$, 
with equal random diffusion rates and a weaker cooperative condition. 
Moreover, we also obtain the existence of an 
$L^{\infty}(\Omega )$ absorbing set:
\begin{thm}\label{1stthm}
Let $\Omega$ be a bounded convex domain in $\mathbb{R}^{N}$ with $N\le 3$.
Assume that $d_{1}=d_{2}$ and
\begin{equation}\label{weaker} 
2\sqrt{\gamma b_{1}c_{2}}>c_{1}+\gamma b_{2}\quad\mbox{with}\quad
\gamma=\dfrac{\alpha}{\beta}.
\end{equation}
If nonnegative $u_{0}$ and
$v_{0}$ belong to $W^{1,\infty}(\Omega )$,
then 
$T_{{\rm max}}=\infty$ in Theorem 1.1,
that is,
\eqref{para}
admits a time-global solution
$$
(u,v)\in [\,C(\overline{\Omega}\times [0,\infty))\cap
C^{2,1}(\overline{\Omega}\times (0,\infty))\,]^{2}.
$$
Furthermore,
there exist $K=K(a_{i}, b_{i}, c_{i},\gamma )>0$
and
$t_{0}=t_{0}(\|u_{0}+\gamma v_{0}\|_{L^{\infty}(\Omega )})>0$
such that
\begin{equation}\label{abs}
t\ge t_{0}\ \Longrightarrow\
\|u(\,\cdot\,,t)\|_{L^{\infty}(\Omega )}+\gamma
\|v(\,\cdot\,,t)\|_{L^{\infty}(\Omega )}\le K.
\end{equation}
\end{thm}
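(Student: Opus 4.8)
\emph{Plan of proof.} The key observation is that the combination $w:=u+\gamma v$ satisfies a scalar semilinear heat equation. Multiplying the second equation of \eqref{para} by $\gamma=\alpha/\beta$ and adding it to the first, the strongly coupled diffusion terms cancel: by \eqref{fd} and $\gamma\beta=\alpha$,
\begin{equation*}
\alpha\bigl(v\Delta u-u\Delta v\bigr)+\alpha\bigl(u\Delta v-v\Delta u\bigr)=0,
\end{equation*}
and since $d_1=d_2=:d$ we obtain $w_t=d\Delta w+f(u,v)$ with $\partial_\nu w=0$, where
$f(u,v)=a_1u+\gamma a_2v-Q(u,v)$ and $Q(u,v):=b_1u^2-(c_1+\gamma b_2)uv+\gamma c_2v^2$. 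The quadratic form $Q$ is positive definite exactly when $4\gamma b_1c_2>(c_1+\gamma b_2)^2$, i.e.\ precisely under hypothesis \eqref{weaker}, so $Q(u,v)\ge\delta(u^2+v^2)$ for some $\delta=\delta(b_i,c_i,\gamma)>0$. Using $0\le u\le w$, $0\le\gamma v\le w$ and $(u+\gamma v)^2\le(1+\gamma^2)(u^2+v^2)$, this gives a pointwise estimate $f(u,v)\le C_1w-C_2w^2$ with $C_1,C_2>0$ depending only on the coefficients.

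Hence $w$ is a subsolution of the Neumann problem for the logistic equation $\phi_t=d\Delta\phi+C_1\phi-C_2\phi^2$. Comparing with the spatially homogeneous supersolution $\xi(t)$ solving the ODE $\xi'=C_1\xi-C_2\xi^2$ with $\xi(0)=\|u_0+\gamma v_0\|_{L^\infty(\Omega)}$, the parabolic comparison principle yields $0\le w(\cdot,t)\le\xi(t)$ on $[0,T_{\max})$. Since $\xi(t)\le\max\{\xi(0),\,C_1/C_2\}$ for all $t$ and $\xi(t)\to C_1/C_2$, we get an a priori $L^\infty$ bound on $w$, hence on $u$ and $v$ through $\|u(\cdot,t)\|_\infty\le\|w(\cdot,t)\|_\infty$ and $\gamma\|v(\cdot,t)\|_\infty\le\|w(\cdot,t)\|_\infty$; with $K:=2C_1/C_2+1$ and $t_0=t_0(\xi(0))$ chosen so that $\xi(t)\le K/2$ for $t\ge t_0$, this is exactly \eqref{abs} once global existence is established.

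It then remains to bound $\|u(\cdot,t)\|_{W^{1,4}}+\|v(\cdot,t)\|_{W^{1,4}}$ on every finite time interval, after which the blow-up criterion \eqref{Amanncri} (with $p=4>3\ge N$) forces $T_{\max}=\infty$. Since $f(u,v)$ is now bounded in $L^\infty(\Omega\times(0,T))$, parabolic $L^p$ maximal regularity for the semilinear equation for $w$ gives $w$ bounded in $W^{2,1}_p$ for all $p<\infty$; in particular $\nabla w\in L^\infty$ and $\Delta w\in L^p$. Using $\gamma\Delta v=\Delta w-\Delta u$ and $\gamma\beta=\alpha$, the first equation of \eqref{para} rewrites as the scalar non-divergence equation
\begin{equation*}
u_t=\bigl(d+\alpha v+\beta u\bigr)\Delta u-\beta u\,\Delta w+u(a_1-b_1u+c_1v),
\end{equation*}
whose leading coefficient is continuous and bounded below by $d>0$. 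One then closes a $W^{1,4}$ bound for $u$, either directly by parabolic $L^p$ estimates for this scalar equation, or — following the scheme of Lou and Winkler \cite{LW} — by testing with $-\Delta u$: the term $-\int_\Omega(d+\alpha v+\beta u)|\Delta u|^2$ has the right sign and dominates $d\|\Delta u\|_2^2$, the convexity of $\Omega$ is used to control the resulting boundary contribution and bound $\|D^2u\|_{L^2}$ by $\|\Delta u\|_{L^2}$, the remaining terms are absorbed by Young's inequality together with the $L^p$ bound on $\Delta w$, and a Gagliardo--Nirenberg interpolation (valid for $N\le3$) yields control of $\|\nabla u(\cdot,t)\|_{L^4}$; then $\nabla v=\gamma^{-1}(\nabla w-\nabla u)$ is controlled as well.

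The main obstacle is precisely this last $W^{1,4}$ a priori estimate. Because of the strong coupling, naive energy estimates for $u$ and $v$ separately are inconsistent (the cross terms carry the potentially large factors $\alpha\|u\|_\infty$, $\beta\|v\|_\infty$ and cannot be absorbed by the diffusion), and all three structural hypotheses enter here: the equality $d_1=d_2$ makes the coupling cancel in the equation for $w$, the convexity of $\Omega$ handles the boundary terms in the second-order estimates, and $N\le3$ is what makes the Sobolev / Gagliardo--Nirenberg exponents close. The adaptation of the Lou--Winkler energy estimates to the present nonlinear diffusion is the technical heart of the argument.
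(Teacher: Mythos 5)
The first half of your argument --- the cancellation of the cross-diffusion terms in $w=u+\gamma v$ when $d_1=d_2$, the positive definiteness of the quadratic form under \eqref{weaker}, the comparison with the spatially homogeneous logistic ODE, and the resulting $L^\infty$ bound and absorbing set --- is exactly the paper's Lemma \ref{comlem} and the concluding step of its proof of Theorem \ref{1stthm}. That part is correct and complete.

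The genuine gap is in the $W^{1,4}$ a priori estimate, which you correctly identify as the heart of the matter but whose mechanism you do not supply, and the two mechanisms you do suggest would not close. Testing the rewritten scalar equation for $u$ with $-\Delta u$ yields only $\|\nabla u(\cdot,t)\|_{L^2}$ bounded pointwise in time together with $\int_t^{t+1}\|\Delta u\|_{L^2}^2\,ds$ bounded; a Gagliardo--Nirenberg interpolation from these quantities controls $\|\nabla u\|_{L^4}$ only in a time-averaged sense, and a time-averaged bound does not contradict the blow-up criterion \eqref{Amanncri}, which requires ruling out $\limsup_{t\nearrow T_{\max}}\|u(\cdot,t)\|_{W^{1,4}}=\infty$. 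The alternative of ``direct parabolic $L^p$ estimates'' for the non-divergence equation $u_t=(d+\alpha v+\beta u)\Delta u-\beta u\Delta w+f$ is also unavailable at this stage, since $L^p$ theory needs the leading coefficient to be (at least) uniformly continuous, and a priori you only know $u,v\in L^\infty$. The paper instead (i) first proves a uniform H\"older estimate for $u$ and $v$ by writing the $u$-equation in divergence form and invoking Porzio--Vespri (Lemma \ref{Hollem}); (ii) computes $\frac{d}{dt}\|\nabla u\|_{L^4}^4$ directly, exploiting a cancellation of the worst terms $\mp\beta\int_\Omega w|\nabla u|^2|\Delta u|^2$ between the two pieces $I$ and $J$ coming from the nonlinear diffusion; (iii) absorbs the resulting $\|\nabla u\|_{L^6}^6$ and $\|\nabla v\|_{L^6}^6$ into the dissipation $\int_\Omega|\nabla u|^2|D^2u|^2$ via an Ehrling-type lemma that \emph{requires} the H\"older bound from step (i) --- a plain Gagliardo--Nirenberg estimate gives $\|\nabla u\|_{L^6}^6\lesssim\bigl(\int_\Omega|\nabla u|^2|D^2u|^2\bigr)^{6/5}\|\nabla u\|_{L^2}^{6/5}$, whose superlinear exponent cannot be absorbed; and (iv) handles the fact that $\|\Delta w\|_{L^3}^3$ is controlled only in time-average (via maximal regularity) by a Gronwall-type lemma. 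Steps (i), (iii) and (iv) are absent from your plan, and step (ii) is replaced by an $L^2$-level test that is too weak; as written, the proposal does not reach the pointwise-in-time $W^{1,4}$ bound needed to conclude $T_{\max}=\infty$.
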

It is noted that
\eqref{weaker} leads to
the weak cooperative condition $b_{1}c_{2}>b_{2}c_{1}$
since $c_{1}+\gamma b_{2}\ge \sqrt{\gamma b_{2}c_{1}}$.
In this sense,
the condition \eqref{weaker} involving $\gamma$ is an even
weaker cooperative than the usual weak cooperative condition
$b_{1}c_{2}>b_{2}c_{1}$.

It is easy to check that,
in the weak cooperative case where $b_{1}c_{2}>b_{2}c_{1}$,
\eqref{para} admits a unique positive constant solution
\begin{equation}\label{const}
(u^{*}, v^{*}):=\biggl(
\dfrac{a_{1}c_{2}+a_{2}c_{1}}{b_{1}c_{2}-b_{2}c_{1}},
\dfrac{a_{1}b_{2}+a_{2}b_{1}}{b_{1}c_{2}-b_{2}c_{1}}\biggr)
\end{equation}
provided that
either of the following three conditions holds:
\begin{equation}\label{weakconst}
\arraycolsep=1.6pt
\def\arraystretch{1.5}
\begin{array}{rll}
\text{(i) } & 0<a_{1},\,a_{2};\\
\text{(ii) } & a_{2}<0<a_{1}\quad\mbox{and}\quad
\dfrac{b_{1}}{b_{2}}<
\dfrac{a_{1}}{|a_{2}|};\\
\text{(iii) }& a_{1}<0<a_{2}\quad\mbox{and}\quad
\dfrac{|a_{1}|}{a_{2}}<
\dfrac{c_{1}}{c_{2}}.
\end{array}
\end{equation}
The next result asserts that 
under the same conditions as Theorem \ref{1stthm}, 
if there is a positive constant steady state
$(u^{*}, v^{*})$ and the 
equal random diffusion rate is sufficiently large, then $(u^{*}, v^{*})$ is globally asymptotically stable
(GAS):
\begin{thm}\label{2ndthm}
Let $d_{1}=d_{2}\,(\,=:d\,)$
and $\Omega$ be a bounded convex domain in $\mathbb{R}^{N}$
with $N\le 3$.
Assume \eqref{weaker} and either of (i)-(iii) in \eqref{weakconst},
Then there exists $\overline{d}=\overline{d}(\alpha,\beta,a_{i},b_{i},c_{i})>0$ such that if $d\ge\overline{d}$, then 
any positive solution $(u, v)$ of 
\eqref{para}
satisfies
\begin{equation}\label{L2aym}
\lim_{t\to\infty}(u(\,\cdot\,,t), v(\,\cdot\,,t))=
(u^{*}, v^{*})
\quad\mbox{in}\quad L^{2}(\Omega)\times L^{2}(\Omega ).
\end{equation}
\end{thm}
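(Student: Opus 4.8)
The plan is to build a Lyapunov functional of Volterra type, weighted by the coefficients $1$ and $\gamma$ that already appear in \eqref{weaker}. For a positive solution $(u,v)$ set
\[
E(t):=\int_{\Omega}\Bigl[\Bigl(u-u^{*}-u^{*}\ln\tfrac{u}{u^{*}}\Bigr)
+\gamma\Bigl(v-v^{*}-v^{*}\ln\tfrac{v}{v^{*}}\Bigr)\Bigr]\,dx .
\]
By convexity of $-\ln$ the integrand is nonnegative and vanishes exactly at $(u^{*},v^{*})$, so $E(t)\ge 0$. First I would invoke Theorem \ref{1stthm}: under the hypotheses of Theorem \ref{2ndthm} the solution is global and there are $t_{0}>0$ and $K=K(a_{i},b_{i},c_{i},\gamma)>0$ with $\|u(\cdot,t)\|_{L^{\infty}(\Omega)}+\gamma\|v(\cdot,t)\|_{L^{\infty}(\Omega)}\le K$ for $t\ge t_{0}$; since a positive classical solution is continuous and strictly positive on $\overline{\Omega}$, also $E(t_{0})<\infty$.

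The next step is to differentiate $E$ along the flow. Writing the equations as $u_{t}=d\Delta u+\alpha\nabla\cdot(v\nabla u-u\nabla v)+u(a_{1}-b_{1}u+c_{1}v)$ and $v_{t}=d\Delta v+\beta\nabla\cdot(u\nabla v-v\nabla u)+v(a_{2}+b_{2}u-c_{2}v)$, multiplying the first by $(u-u^{*})/u$ and the second by $\gamma(v-v^{*})/v$, integrating over $\Omega$ and integrating by parts (the homogeneous Neumann conditions kill every boundary term because $\nabla((u-u^{*})/u)=u^{*}u^{-2}\nabla u$ and similarly for $v$), and finally substituting $a_{1}=b_{1}u^{*}-c_{1}v^{*}$, $a_{2}=c_{2}v^{*}-b_{2}u^{*}$, one arrives, with $p:=\nabla u/u$ and $q:=\nabla v/v$, at
\[
\frac{dE}{dt}=-\int_{\Omega}\Bigl[u^{*}(d+\alpha v)|p|^{2}+v^{*}(\gamma d+\alpha u)|q|^{2}-\alpha(u^{*}v+v^{*}u)\,p\cdot q\Bigr]dx+\int_{\Omega}Q(u-u^{*},v-v^{*})\,dx ,
\]
where $Q(X,Y):=-b_{1}X^{2}+(c_{1}+\gamma b_{2})XY-\gamma c_{2}Y^{2}$. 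Since \eqref{weaker} is precisely $(c_{1}+\gamma b_{2})^{2}<4\gamma b_{1}c_{2}$, the form $Q$ is negative definite, hence $\int_{\Omega}Q\le-\delta\bigl(\|u-u^{*}\|_{2}^{2}+\|v-v^{*}\|_{2}^{2}\bigr)$ for some $\delta>0$.

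It then remains to control the gradient term. Pointwise it is the quadratic form in $(p,q)$ attached to the $2\times2$ matrix with diagonal entries $u^{*}(d+\alpha v)$, $v^{*}(\gamma d+\alpha u)$ and off-diagonal entry $-\tfrac{\alpha}{2}(u^{*}v+v^{*}u)$, hence nonnegative provided $4u^{*}v^{*}(d+\alpha v)(\gamma d+\alpha u)\ge\alpha^{2}(u^{*}v+v^{*}u)^{2}$. Using $0\le u\le K$, $0\le v\le K/\gamma$ for $t\ge t_{0}$ together with $(d+\alpha v)(\gamma d+\alpha u)\ge\gamma d^{2}$, this inequality holds as soon as $d\ge\overline{d}$ for an explicit threshold $\overline{d}=\overline{d}(\alpha,\beta,a_{i},b_{i},c_{i})$ (recall $u^{*},v^{*}$ and $K$ depend only on these data). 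Thus for $d\ge\overline{d}$ and $t\ge t_{0}$ we obtain $\frac{dE}{dt}\le-\delta\bigl(\|u-u^{*}\|_{2}^{2}+\|v-v^{*}\|_{2}^{2}\bigr)$; integrating on $[t_{0},t]$ and using $0\le E(t)\le E(t_{0})<\infty$ yields $\int_{t_{0}}^{\infty}\bigl(\|u-u^{*}\|_{2}^{2}+\|v-v^{*}\|_{2}^{2}\bigr)\,dt<\infty$.

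Finally I would upgrade this integrability to the limit \eqref{L2aym} by Barbalat's lemma. Because the $L^{\infty}$ bound makes the system uniformly parabolic with bounded coefficients, parabolic regularity gives bounds on $(u,v)$ in, say, $W^{2,2}(\Omega)$ — hence on $(u_{t},v_{t})$ in $L^{2}(\Omega)$ — that are uniform for $t\ge t_{0}+1$; therefore $t\mapsto\|u(\cdot,t)-u^{*}\|_{2}^{2}+\|v(\cdot,t)-v^{*}\|_{2}^{2}$ is uniformly continuous on $[t_{0}+1,\infty)$ and, being integrable there, tends to $0$. I expect the genuinely delicate point to be verifying nonnegativity of the gradient quadratic form for large $d$: this is exactly where the $L^{\infty}$ absorbing estimate of Theorem \ref{1stthm} and the largeness of $d$ are both used, and it is what fixes $\overline{d}$; by contrast the regularity needed for the Barbalat step is routine here precisely because $d_{1}=d_{2}$ and $(u,v)$ is already known to be bounded.
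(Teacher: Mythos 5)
Your proposal is correct and follows essentially the same route as the paper: the identical weighted Volterra functional, the same decomposition of $\frac{dE}{dt}$ into the reaction quadratic form (negative definite by \eqref{weaker}, the paper's matrix $A$) and the gradient quadratic form (the paper's matrix $B$, made positive semidefinite for $d\ge\overline{d}$ via the $L^{\infty}$ absorbing bound $K$, yielding the very same threshold $\overline{d}=(\beta u^{*}+\alpha v^{*})K/(2\sqrt{\gamma u^{*}v^{*}})$), followed by integrability of $\mathcal{D}(t)$ and a Barbalat-type argument. The only cosmetic difference is that the paper justifies uniform continuity of $\mathcal{D}$ via the uniform H\"older estimate of Lemma \ref{Hollem} rather than via an $L^{2}$ bound on $(u_{t},v_{t})$.
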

Here we refer to some results on the existence of 
nonconstant positive steady states found by Adachi and
the second author \cite{AK}.
According to an interpretation of 
\cite[Theorem 1]{AK}
for the case where $d_{1}=d_{2}\,(\,=:d)$
and either of (ii) or (iii) in \eqref{weakconst} is imposed,
if some additional conditions involving $(\alpha, \beta )$ are assumed,
then there exists a positive sequence 
$\{d^{(j)}_{*}\}$ with $d^{(j)}_{*}\to 0$
$(j\to\infty)$ such that
nonconstant positive steady states
bifurcate from $(u^{*}, v^{*})$ at 
$d=d^{(j)}_{*}$
(see Theorem \ref{AKthm1} below
in the case where (ii) of \eqref{weakconst} 
is assumed).
Then,
in a range of small $d>0$,
infinitely many bifurcation points appear on the branch
$\{(d,u^{*}, v^{*})\}$ of the positive constant steady state
from which nonconstant steady states bifurcate
due to the effect of $\alpha$ and $\beta$. 
Hence for such a small range of $d$, 
$(u^{*}, v^{*})$ is no longer globally asymptotically stable.
This scenario is totally different from that of
the weak cooperative linear diffusive system with $\alpha =\beta =0$
in \eqref{para}, where all positive solutions tend to
$(u^{*}, v^{*})$ as $t\to\infty$.

\section{Existence of the $L^{\infty}(\Omega )$ absorbing set}
This section is devoted to the proof of Theorem \ref{1stthm}.

\subsection{A priori estimates for $w=u+\gamma v$}
Taking an advantage of the equal random diffusion rates, 
we first show the $L^{\infty}(\Omega )$ a priori estimate for 
$w=u+\gamma v$ using the comparison theorem for 
a semilinear heat equation which
$w$ satisfies.
\begin{lem}\label{comlem}
Let $d_{1}=d_{2}$ and assume \eqref{weaker}.
For any
nonnegative functions
$u_{0}$,
and
$v_{0}$
belong to $W^{1,\infty}(\Omega )$,
let 
$(u,v)\in [\,C(\overline{\Omega}\times [0,T_{{\rm max}}))\cap
C^{2,1}(\Omega\times (0,T_{{\rm max}}))\,]^{2}$
be the solution of \eqref{para}, which is ensured by
Theorem \ref{Amannthm}.
Then,
there exist
a positive function $\xi (t)$ uniformly bounded in $[0,\infty )$
and a positive constant $\lambda^{*}=\lambda^{*}(b_{i}, c_{i},\gamma )$
such that
$\xi (0)=\|u_{0}+\gamma v_{0}\|_{L^{\infty }(\Omega )}$,
$$w(x,t):=u(x,t)+\gamma v(x,t)\le \xi (t)\quad
\mbox{for all}\ (x, t)\in\Omega\times (0,T_{{\rm max}}),$$
and
$$
\lim_{t\to\infty}\xi (t)=\dfrac{(1+\gamma^{2})\widetilde{a}}{\lambda^{*}},
$$
where
$\widetilde{a}:=\max\{a_{1}^{+},a_{2}^{+}\}$
for $a_{i}^{+}=\max\{a_{i},0\}$ 
$(i=1,2)$.
\end{lem}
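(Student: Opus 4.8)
The plan is to exploit the hypotheses $d_1=d_2=:d$ and $\alpha=\gamma\beta$ so that $w=u+\gamma v$ solves a scalar semilinear heat equation, and then to invoke the parabolic comparison principle against a logistic ODE.

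First I would add $\gamma$ times the second equation of \eqref{para} to the first. By \eqref{fd} the strongly coupled contributions are $\alpha\,\nabla\cdot(v\nabla u-u\nabla v)$ and $\gamma\beta\,\nabla\cdot(u\nabla v-v\nabla u)$, which cancel identically because $\alpha=\gamma\beta$; and $d_1\Delta u+\gamma d_2\Delta v=d\Delta w$. Hence $w_t=d\Delta w+f(u,v)$ with $\partial_\nu w=0$ and $w(\cdot,0)=u_0+\gamma v_0$, where $f(u,v):=u(a_1-b_1u+c_1v)+\gamma v(a_2+b_2u-c_2v)$. Next I would bound $f$ from above by a function of $w$ alone. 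Writing $f(u,v)=(a_1u+\gamma a_2v)+Q(u,v)$ with $Q(u,v):=-b_1u^2+(c_1+\gamma b_2)uv-\gamma c_2v^2$, the assumption \eqref{weaker} says exactly that the discriminant $(c_1+\gamma b_2)^2-4\gamma b_1c_2$ is negative, so $Q$ is negative definite; thus there is $\lambda^*=\lambda^*(b_i,c_i,\gamma)>0$ (the smallest eigenvalue of the associated symmetric matrix) with $Q(u,v)\le-\lambda^*(u^2+v^2)$ for all real $u,v$. Using $u,v\ge0$ one has $a_1u+\gamma a_2v\le\widetilde a(u+\gamma v)=\widetilde a\,w$, and by Cauchy--Schwarz $w^2\le(1+\gamma^2)(u^2+v^2)$, whence
\[
f(u,v)\ \le\ \widetilde a\,w-\frac{\lambda^*}{1+\gamma^2}\,w^2\ =:\ g(w).
\]

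Now let $\xi(t)$ be the solution of the logistic ODE $\xi'=g(\xi)$ with $\xi(0)=\|u_0+\gamma v_0\|_{L^\infty(\Omega)}$; equivalently, $\xi$ is the spatially constant solution of $z_t=d\Delta z+g(z)$ under homogeneous Neumann conditions with that constant datum. Since $w(\cdot,0)\le\xi(0)$ and $w_t-d\Delta w=f(u,v)\le g(w)$, the function $h:=\xi-w$ satisfies $h_t-d\Delta h-c(x,t)h\ge0$ with $c(x,t):=\widetilde a-\frac{\lambda^*}{1+\gamma^2}(\xi+w)$, which is bounded on $\overline\Omega\times[0,T]$ for every $T<T_{\rm max}$, together with $\partial_\nu h=0$ and $h(\cdot,0)\ge0$. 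The maximum principle then yields $h\ge0$, i.e. $w\le\xi$ on $\Omega\times(0,T_{\rm max})$. Finally, the elementary phase-line analysis of the logistic equation shows that $\xi$ stays positive and is monotone, hence uniformly bounded on $[0,\infty)$, and $\xi(t)\to\widetilde a(1+\gamma^2)/\lambda^*$ as $t\to\infty$ (the limit being $0$ when $\widetilde a=0$), which is the claimed value.

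I do not expect a genuine obstacle here. The one substantive point is the algebraic identification of \eqref{weaker} as precisely the negative-definiteness condition for $Q$; the only matters needing a little care are making the comparison argument rigorous on the half-open interval $(0,T_{\rm max})$ by controlling the zeroth-order coefficient $c(x,t)$ locally in time, and checking that $\lambda^*$ can be taken to depend only on $b_i,c_i,\gamma$. Note that convexity of $\Omega$ plays no role in this lemma.
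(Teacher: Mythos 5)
Your proposal is correct and follows essentially the same route as the paper: add $\gamma$ times the second equation to cancel the nonlinear diffusion terms, identify \eqref{weaker} with positive definiteness of the quadratic form (the paper phrases it as $\det A>0$ for the symmetric matrix $A$, which is your negative-discriminant condition for $Q$), bound the reaction term by $\widetilde a\,w-\tfrac{\lambda^*}{1+\gamma^2}w^2$, and compare with the logistic ODE. The extra care you note about the zeroth-order coefficient in the comparison argument is a point the paper passes over as ``the usual comparison argument,'' but it does not change the substance.
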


\begin{proof}
We set $d:=d_{1}=d_{2}$ and $\gamma:=\alpha/\beta$.
By adding $\gamma $ times of the second equation 
to the first equation in \eqref{para}, 
we find that $w=u+\gamma v$ 
satisfies the following semilinear heat equation:
\begin{equation}\label{wsemi}
\begin{split}
w_{t}&=d\Delta w+u(a_{1}-b_{1}u+c_{1}v)+\gamma v(a_{2}+b_{2}u-c_{2}v)\\
&=
d\Delta w+a_{1}u+\gamma a_{2}v-
[u,v]
\left[
\begin{array}{cc}
b_{1} & -\frac{c_{1}+\gamma b_{2}}{2}\\
-\frac{c_{1}+\gamma b_{2}}{2} & \gamma c_{2}
\end{array}
\right]\left[
\begin{array}{c}
u\\
v
\end{array}
\right].
\end{split}
\end{equation}
Here we note that the matrix
\begin{equation}\label{Adef}
A:=\left[
\begin{array}{cc}
b_{1} & -\frac{c_{1}+\gamma b_{2}}{2}\\
-\frac{c_{1}+\gamma b_{2}}{2} & \gamma c_{2}
\end{array}
\right]
\end{equation}
satisfies
\begin{equation}
\begin{split}
\det A
%\left[
%\begin{array}{cc}
%b_{1} & -\frac{c_{1}+\gamma b_{2}}{2}\\
%-\frac{c_{1}+\gamma b_{2}}{2} & \gamma c_{2}
%\end{array}
%\right]
&=
\gamma b_{1}c_{2}-\biggl(\dfrac{c_{1}+\gamma b_{2}}{2}\biggr)^{2}\\
&=
\biggl(\sqrt{\gamma b_{1}c_{2}}
+\dfrac{c_{1}+\gamma b_{2}}{2}\biggr)
\biggl(\sqrt{\gamma b_{1}c_{2}}
-\dfrac{c_{1}+\gamma b_{2}}{2}\biggr)>0
\end{split}
\nonumber
\end{equation}
by \eqref{weaker}.
Then $A$ is positive definite and
there exists 
$
\lambda^{*}=\lambda^{*}(b_{i}, c_{i}, \gamma )>0
$
$(i=1,2)$
such that
\begin{equation}\label{lamdef}
\boldsymbol{X}A\boldsymbol{X}^{T}\ge \lambda^{*}
|\boldsymbol{X}|^{2}\quad
\mbox{for all}\
\boldsymbol{X}=(
X_{1},
X_{2}
) \in\mathbb{R}^{2}.
\end{equation}
Then it follows from \eqref{wsemi} that
\begin{equation}\label{wsemi2}
\begin{split}
w_{t}&
\le d\Delta w+a_{1}u+\gamma a_{2}v-\lambda^{*}(u^{2}+v^{2})\\
&\le
d\Delta w+w\biggl(\widetilde{a}-\dfrac{\lambda^{*}}{1+\gamma^{2}}w
\biggr)
\quad\mbox{for all}\ (x,t)\in \Omega\times (0,T_{{\rm max}}),
\end{split}
\end{equation}
where 
$\widetilde{a}:=\max\{a_{1}^{+},a_{2}^{+}\}$
for $a_{i}^{+}=\max\{a_{i},0\}$ 
$(i=1,2)$.
Let $\xi (t)$ be the solution to the following initial value problem 
of the logistic equation:
$$
\begin{cases}
\dfrac{d}{dt}\xi (t)=\xi (t)\biggl(\widetilde{a}-
\dfrac{\lambda^{*}}{1+\gamma^{2}}\xi(t)\biggr),\quad t>0,\\
\xi (0)=\|u_{0}+\gamma v_{0}\|_{L^{\infty}(\Omega )}.
\end{cases}
$$
Hence it follows that
$$
\lim_{t\to\infty}\xi (t)=\dfrac{(1+\gamma^{2})\widetilde{a}}{\lambda^{*}}.
$$
Owing to \eqref{wsemi2},
the usual comparison argument implies that
$$
0\le w(x,t)\le \xi (t)\quad\mbox{for all}\
(x, t)\in \Omega\times (0,T_{{\rm max}}).
$$
Then the proof of Lemma \ref{comlem} is complete.
\end{proof}
The following lemma is based on the maximal regularity theory
for \eqref{wsemi} and will help in the derivation of 
the $W^{1,4}(\Omega )$ a priori estimate of $u$ and $v$.
\begin{lem}\label{maxlem}
Assume $d_{1}=d_{2}$ and \eqref{weaker}. 
For
nonnegative functions
$u_{0}$
and
$v_{0}$
belonging to $W^{1,\infty}(\Omega )$, 
let 
$(u,v)\in [\,C(\overline{\Omega}\times [0,T_{{\rm max}}))\cap
C^{2,1}(\Omega\times (0,T_{{\rm max}}))\,]^{2}$
be the solution of \eqref{para}, which is ensured by
Theorem \ref{Amannthm}.
Then there exists a positive constant $\widehat{C}$
such that
$w=u+\gamma v$ satisfies
$$\int^{t+\tau}_{t}\|\Delta w(\,\cdot\,,s)\|_{L^{3}(\Omega)}^{3}\,ds
\le \widehat{C}\quad\mbox{for all}\ t\in (\tau, \widehat{T}_{{\rm max}}),$$
where
\begin{equation}\label{taudef}
\tau:=
\begin{cases}
1\quad&\mbox{if}\ T_{{\rm max}}\ge 3,\\
T_{\rm max}/3\quad&\mbox{if}\ T_{{\rm max}}<3
\end{cases}
%\min\biggl\{
%1,\dfrac{1}{3}T_{{\rm max}}\biggr\},
\qquad\mbox{and}\qquad
\widehat{T}_{{\rm max}}=
\begin{cases}
T_{{\rm max}}-\tau\quad&\mbox{if}\ T_{{\rm max}}<\infty,\\
\infty &\mbox{if}\ T_{{\rm max}}=\infty
\end{cases}
\end{equation}
and $\widehat{C}$ depends on 
$\|u(\,\cdot\,,\tau )+\gamma v(\,\cdot\,,\tau)\|_{W^{2,3}(\Omega )}$
but is independent of
$t\in (\tau, \widehat{T}_{{\rm max}})$.
\end{lem}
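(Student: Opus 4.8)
The plan is to treat $w=u+\gamma v$ as a solution of the linear inhomogeneous heat equation $w_{t}-d\Delta w=g$ under the homogeneous Neumann boundary condition, where $g:=a_{1}u+\gamma a_{2}v-[u,v]\,A\,[u,v]^{T}$ is the reaction term in \eqref{wsemi} and $A$ is the matrix \eqref{Adef}, and then to invoke parabolic $L^{p}$ maximal regularity on time windows of length $\tau$. First one records the pointwise bounds that feed the estimate. By Lemma \ref{comlem}, $0\le w\le \sup_{t\ge0}\xi(t)=:M$ on $\Omega\times(0,T_{{\rm max}})$, and since $0\le u\le w$ and $0\le\gamma v\le w$ this gives $\|u(\,\cdot\,,t)\|_{L^{\infty}(\Omega)}\le M$ and $\|v(\,\cdot\,,t)\|_{L^{\infty}(\Omega)}\le M/\gamma$, hence $\|g\|_{L^{\infty}(\Omega\times(0,T_{{\rm max}}))}\le M'$ for some $M'=M'(a_{i},b_{i},c_{i},\gamma)$. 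Moreover, by Theorem \ref{Amannthm} the solution is $C^{2,1}$ for positive times, so $w(\,\cdot\,,\tau)\in C^{2}(\overline{\Omega})\subset W^{2,3}(\Omega)$ with $\partial_{\nu}w(\,\cdot\,,\tau)=0$; this is the slice whose $W^{2,3}$ norm the constant $\widehat{C}$ is permitted to involve.

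The core step is an estimate of the time slices $\|w(\,\cdot\,,t)\|_{W^{2,3}(\Omega)}$ that is \emph{uniform} in $t\in[\tau,T_{{\rm max}})$. On the backward window $(\max\{0,t-\tau\},t)$, multiply $w$ by a temporal cutoff $\eta$ that vanishes on a left neighbourhood of the left endpoint and equals $1$ near $t$; then $\eta w$ solves $(\eta w)_{t}-d\Delta(\eta w)=\eta g+\eta' w$ with zero initial value and homogeneous Neumann data, so parabolic $L^{p}$ maximal regularity gives $\|\eta w\|_{W^{2,1}_{p}}\le C(\|g\|_{L^{p}}+\|w\|_{L^{p}})$ on that window, with $C$ independent of $t$ because $\partial_{t}-d\Delta$ has $t$-independent coefficients. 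Choosing $p$ large enough that the parabolic embedding $W^{2,1}_{p}(\Omega\times I)\hookrightarrow C(\overline{I};W^{2,3}(\Omega))$ holds and inserting the $L^{\infty}$ bounds of the first step, one obtains $\sup_{t\in[\tau,T_{{\rm max}})}\|w(\,\cdot\,,t)\|_{W^{2,3}(\Omega)}\le R$, where $R$ depends on $M$, $M'$, $\Omega$, $d$, $\tau$ and on $\|w(\,\cdot\,,\tau)\|_{W^{2,3}(\Omega)}$ (used when the window reaches back to time $0$) but not on $t$.

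Finally, fix $t\in(\tau,\widehat{T}_{{\rm max}})$, translate time so that $(t,t+\tau)$ becomes $(0,\tau)$, and apply $L^{3}$ maximal regularity to $\tilde w_{t}-d\Delta\tilde w=\tilde g$ with $\tilde w(\,\cdot\,,0)=w(\,\cdot\,,t)\in W^{2,3}(\Omega)$, $\partial_{\nu}\tilde w(\,\cdot\,,0)=0$:
$$\int_{0}^{\tau}\|\Delta\tilde w(\,\cdot\,,s)\|_{L^{3}(\Omega)}^{3}\,ds
\le C\Bigl(\int_{0}^{\tau}\|\tilde g(\,\cdot\,,s)\|_{L^{3}(\Omega)}^{3}\,ds
+\|w(\,\cdot\,,t)\|_{W^{2,3}(\Omega)}^{3}\Bigr)
\le C\bigl(\tau|\Omega|\,{M'}^{3}+R^{3}\bigr)=:\widehat{C},$$
and undoing the translation yields the stated inequality, with $\widehat{C}$ depending on $\|w(\,\cdot\,,\tau)\|_{W^{2,3}(\Omega)}$ through $R$ but not on $t$. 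I expect the main obstacle to be the second step: the slice bound must be uniform in $t$, which hinges on the time-translation invariance of the heat operator together with the $t$-independent $L^{\infty}$ control of $w$ and $g$ from Lemma \ref{comlem}; the specific exponent $3$ is dictated by the subsequent $W^{1,4}(\Omega)$ a priori estimate for $u$ and $v$, where the restriction $N\le 3$ will be genuinely used.
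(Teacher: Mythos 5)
Your strategy coincides with the paper's: both treat $w$ as a solution of the linear heat equation $w_t-d\Delta w=g$ with homogeneous Neumann data, use Lemma \ref{comlem} to bound $g$ in $L^{\infty}(\Omega\times(0,T_{\rm max}))$ uniformly, and then invoke parabolic $L^p$ maximal regularity on windows of length $\tau$ with $p=q=3$. The paper simply quotes the windowed estimate \eqref{MR} with a $t$-independent constant (citing Giga--Sohr), whereas you supply the mechanism behind that uniformity (time-translation invariance of $\partial_t-d\Delta$ plus a uniform slice bound obtained by a temporal cutoff); that extra detail is welcome, since \eqref{MR} as stated is exactly the point that needs justification.

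One step of your argument is, however, stated incorrectly. There is no $p<\infty$ for which $W^{2,1}_p(\Omega\times I)\hookrightarrow C(\overline{I};W^{2,3}(\Omega))$: the temporal trace space of $W^{2,1}_p$ is only the Besov space $B^{2-2/p}_{p,p}(\Omega)$, which has strictly fewer than two derivatives, and no Sobolev-type embedding can raise the differentiability order back to $2$. So the uniform bound $\sup_t\|w(\,\cdot\,,t)\|_{W^{2,3}(\Omega)}\le R$ is not obtainable this way. The gap is repairable without changing the architecture: the initial-data class for $L^3$ maximal regularity on $(t,t+\tau)$ is not $W^{2,3}(\Omega)$ but the trace space $B^{4/3}_{3,3}(\Omega)$, and your cutoff argument does yield $\sup_{t\in[\tau,T_{\rm max})}\|w(\,\cdot\,,t)\|_{B^{2-2/p}_{p,p}(\Omega)}\le R$; since $B^{2-2/p}_{p,p}(\Omega)\hookrightarrow B^{4/3}_{3,3}(\Omega)$ for $p$ large (as $2-2/p-N/p>4/3-N/3$ when $N\le 3$), the final windowed estimate goes through with $\|w(\,\cdot\,,t)\|_{B^{4/3}_{3,3}(\Omega)}$ in place of $\|w(\,\cdot\,,t)\|_{W^{2,3}(\Omega)}$. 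With that correction your proof is complete and matches the paper's.
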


\begin{proof}
For any solution $(u,v)$ of \eqref{para}, we set
$$
h(x,t):=u(x,t)(a_{1}-b_{1}u(x,t)+c_{1}v(x,t))
+\gamma v(x,t)(a_{2}+b_{2}u(x,t)-c_{2}v(x,t)).$$
Then $w=u+\gamma v$ satisfies 
$$
\begin{cases}
w_{t}=d\Delta w+h(x,t),\quad &(x,t)\in\Omega\times (0,T_{{\rm max}}),\\
\dfrac{\partial w}{\partial\nu}=0,\quad
&(x,t)\in\partial\Omega\times  (0,T_{{\rm max}}).
\end{cases}
$$
Under the assumption of Lemma \ref{maxlem},
we know from Lemma \ref{comlem} that
\begin{equation}\label{fub}
\|h(\,\cdot\,,t)\|_{L^{\infty}(\Omega )}\le
C_{0}
\end{equation}
with some positive constant $C_{0}$ independently of $t\in (0,T_{{\rm max}})$.
By usual application of the maximum regularity theory
(e.g.\,\cite {GS}) for
the above heat equation,
one can find a positive constant $\widehat{C}_{1}=\widehat{C}_{1}(p,q)$
for any $p$, $q\in (1,\infty)$ such that
$\widehat{C}_{1}$ is independent of $t\in(\tau,\widehat{T}_{{\rm max}})$ and
\begin{equation}\label{MR}
\begin{split}
&\int^{t+\tau}_{t}
\left(
\|w(\,\cdot\,,s)\|^{q}_{W^{2,p}(\Omega )}
+\|w_{t}(\,\cdot\,,s)\|^{q}_{L^{p}(\Omega )}
\right)\,ds\\
&\le
\widehat{C}_{1}\biggl(
\|u(\,\cdot\,,\tau )+\gamma v(\,\cdot\,,\tau)\|^{q}_{W^{2,p}(\Omega )}+
\int^{t+\tau}_{t}
\|h(\,\cdot\,,s)\|_{L^{p}(\Omega )}^{q}\,ds\biggr)
\quad\mbox{for all}
\ t\in (\tau, \widehat{T}_{{\rm max}}).
\end{split}
\end{equation}
By setting $p=q=3$,
we derive the desired estimate
from \eqref{fub} and \eqref{MR},
\end{proof}

\subsection{H\"older estimate}
It has already been shown that $u$ and $v$ are uniformly bounded on 
$\Omega\times (0,T_{{\rm max}})$ under the assumptions of 
Lemma \ref{comlem}.
The following lemma gives the H\"older estimate of $u$ and $v$. 
This H\"older estimate will play a role of a condition for application of 
an Ehring-type inequality when deriving the
$W^{1,4}(\Omega )$ a priori estimate later.

\begin{lem}\label{Hollem}
Assume $d_{1}=d_{2}$ and \eqref{weaker}.
For
nonnegative functions
$u_{0}$,
and
$v_{0}$
belonging to $W^{1,\infty}(\Omega )$,
let 
$(u,v)\in [\,C(\overline{\Omega}\times [0,T_{{\rm max}}))\cap
C^{2,1}(\overline{\Omega}\times (0,T_{{\rm max}}))\,]^{2}$
be the solution of \eqref{para}, which is ensured by
Theorem \ref{Amannthm}.
Then there exist 
$\theta\in (0,1)$ and $\widehat{C}>0$ independently of
$t\in (\tau, \widehat{T}_{{\rm max}})$ such that
\begin{equation}\label{HolC}
\|u\|_{C^{\theta, \theta /2}(\overline{\Omega}\times [t, t+\tau ])}+
\|v\|_{C^{\theta, \theta /2}(\overline{\Omega}\times [t, t+\tau ])}\le 
\widehat{C}
\quad\mbox{for all}\ t\in (\tau,\widehat{T}_{{\rm max}}),
\end{equation}
where
$\tau $ and $\widehat{T}_{{\rm max}}$ are defined by
\eqref{taudef}.
\end{lem}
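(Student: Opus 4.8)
The plan is to use the equal diffusion rates $d:=d_1=d_2$ to rewrite each of the two equations of \eqref{para} as a \emph{scalar} divergence-form uniformly parabolic equation whose lower-order data are, uniformly in $t$, controlled by Lemmas \ref{comlem} and \ref{maxlem}, and then to invoke the classical Hölder estimate for such equations. For the reduction I would first use \eqref{fd} to write the $u$-equation as $u_t=(d+\alpha v)\Delta u-\alpha u\,\Delta v+u(a_1-b_1u+c_1v)$, substitute $\Delta v=(\Delta w-\Delta u)/\gamma$, and use $\alpha/\gamma=\beta$ together with $\alpha v+\beta u=\beta w$ to get
\[
u_t=(d+\beta w)\Delta u-\beta u\,\Delta w+u(a_1-b_1u+c_1v)=\nabla\cdot\big[(d+\beta w)\nabla u-\beta u\,\nabla w\big]+u(a_1-b_1u+c_1v),
\]
the last equality being a one-line computation in which the two $\nabla w\cdot\nabla u$ terms cancel. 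Applying the same manipulation to the $v$-equation (now substituting $\Delta u=\Delta w-\gamma\Delta v$) gives $v_t=\nabla\cdot[(d+\beta w)\nabla v-\beta v\,\nabla w]+v(a_2+b_2u-c_2v)$. Since $\partial_\nu u=\partial_\nu v=0$ on $\partial\Omega$ forces $\partial_\nu w=0$, the conormal boundary condition $[(d+\beta w)\nabla u-\beta u\nabla w]\cdot\nu=0$ holds (and coincides with $\partial_\nu u=0$), and likewise for $v$.

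Next I would verify, uniformly in $t$, the structural hypotheses needed for the Hölder estimate. By Lemma \ref{comlem}, $0\le u,\ \gamma v\le w\le\xi(t)\le\overline{\xi}:=\sup_{t\ge0}\xi(t)<\infty$, so $u$ and $v$ are bounded classical (hence weak) solutions with a bound independent of $t$; the coefficient $d+\beta w$ is uniformly elliptic, $d\le d+\beta w\le d+\beta\overline{\xi}$; and the reaction terms $u(a_1-b_1u+c_1v)$, $v(a_2+b_2u-c_2v)$ are bounded independently of $t$. The remaining point is to place the first-order coefficient $-\beta\nabla w$ in a subcritical parabolic Lebesgue class: by \eqref{MR} with $p=q=3$ together with the uniform bound \eqref{fub} on $h$, the integral $\int_t^{t+\tau}\|w(\cdot,s)\|_{W^{2,3}(\Omega)}^3\,ds$ is bounded independently of $t$; since $N\le3$, $W^{1,3}(\Omega)\hookrightarrow L^q(\Omega)$ for every finite $q$, so (taking, say, $q=12$) the quantity $\sup_s\|\nabla w\|_{L^3((s,s+\tau);L^{12}(\Omega))}$ is finite. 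The exponents $(q,r)=(12,3)$ satisfy $\tfrac{N}{q}+\tfrac{2}{r}=\tfrac14+\tfrac23=\tfrac{11}{12}<1$, precisely the subcriticality required of a first-order coefficient.

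With these ingredients I would apply the classical interior and boundary (conormal) Hölder estimate of De Giorgi--Nash--Moser type for bounded weak solutions of divergence-form uniformly parabolic equations whose first-order coefficient lies in such a subcritical mixed-Lebesgue class (as in the parabolic theory of Ladyzhenskaya--Solonnikov--Ural'tseva, or of Aronson--Serrin), on $\Omega\times(\tau,T_{{\rm max}})$ with the smooth data $u(\cdot,\tau),v(\cdot,\tau)\in C^2(\overline{\Omega})$ at the initial time $\tau$ and with the window-uniform data bounds established above. Since all the structural quantities — the ellipticity constants, $\overline{\xi}$, the window norms of $\nabla w$, $\tau$, and $\partial\Omega$ — are independent of the final time, this yields $\theta\in(0,1)$ and a constant $\widehat{C}$ (which may depend on $\|u(\cdot,\tau)+\gamma v(\cdot,\tau)\|_{W^{2,3}(\Omega)}$, as is permitted) with $\|u\|_{C^{\theta,\theta/2}(\overline{\Omega}\times[\tau,T_{{\rm max}}))}+\|v\|_{C^{\theta,\theta/2}(\overline{\Omega}\times[\tau,T_{{\rm max}}))}\le\widehat{C}$; restricting to the windows $[t,t+\tau]\subset[\tau,T_{{\rm max}})$ with $t\in(\tau,\widehat{T}_{{\rm max}})$ gives \eqref{HolC}.

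The crux — and the only point where the dimensional restriction $N\le3$ genuinely enters this lemma — is the last structural check: since Lemma \ref{maxlem} provides only an $L^3$-in-time bound on $\|w\|_{W^{2,3}}$, the coefficient $\beta\nabla w$ can be placed in a class $L^{q,r}$ with $\tfrac{N}{q}+\tfrac{2}{r}<1$ and $r=3$ only when $W^{1,3}(\Omega)\hookrightarrow L^q(\Omega)$ for some $q>3N$, which forces $N\le3$ (for $N=3$ this works because $W^{1,3}$ embeds into every $L^q$ with $q<\infty$; for $N\ge4$ the endpoint already fails). A secondary but necessary point is the identification of the conormal boundary condition for the reduced divergence-form equations with the original Neumann condition, through $\partial_\nu w=0$, so that the up-to-the-boundary Hölder estimate is available; convexity of $\Omega$ is not used here, only the smoothness of $\partial\Omega$.
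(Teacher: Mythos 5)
Your divergence-form reduction is exactly the one the paper uses (your coefficient $d+\beta w$ is the paper's $d+\beta u+\alpha v$, and the cancellation of the $\nabla w\cdot\nabla u$ terms is the content of \eqref{udiv}), and your structural verification is sound; but the key regularity step is taken by a genuinely different route. The paper first upgrades $\nabla w$ to $L^{\infty}(\Omega)$ uniformly in $t$ (estimate \eqref{nw}), by running the maximal regularity \eqref{MR} with $p=q$ \emph{large} and invoking Amann's embedding theorem; the drift $-\beta u\nabla w$ is then a bounded function, and the H\"older estimate follows from Porzio--Vespri with constant inhomogeneities in the structure conditions \eqref{stu1}--\eqref{stu3}. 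You instead keep only the $p=q=3$ maximal regularity of Lemma \ref{maxlem}, place $\nabla w$ in the subcritical mixed class $L^{3}_{t}L^{12}_{x}$ via $W^{1,3}\hookrightarrow L^{12}$, and appeal to the Aronson--Serrin/LSU H\"older theory for bounded weak solutions with drift in a class satisfying $\tfrac{N}{q}+\tfrac{2}{r}<1$. Your route avoids the large-$p$ maximal regularity and the anisotropic embedding, at the price of a heavier regularity citation (one must make sure the chosen theorem covers the conormal boundary case with mixed-norm drift, which LSU does) and of an artificial dimensional restriction.

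That restriction is the one point worth flagging: Lemma \ref{Hollem} as stated assumes only $d_{1}=d_{2}$ and \eqref{weaker}, with no hypothesis $N\le 3$, and the paper's proof indeed works in any dimension because \eqref{MR} holds for arbitrary $p=q$. Your fixation on $p=q=3$ is what forces $N\le 3$; it is not where the dimensional hypothesis ``genuinely enters'' the lemma. Since \eqref{MR} is available for all $p,q\in(1,\infty)$, you could take $p=q$ large and place $\nabla w$ in $L^{p}_{t}L^{\infty}_{x}$ (or any strongly subcritical class) in every dimension, removing the restriction; as the lemma is only ever applied under $N\le 3$, this is a cosmetic rather than a fatal defect. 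Your observations that $\partial_{\nu}w=0$ identifies the conormal and Neumann conditions, and that convexity is not needed here, both agree with the paper.
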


\begin{proof}
In the following proof of Lemma \ref{Hollem},
we denote by $C_{i}$, $\widetilde{C}_{i}$ and
$\widehat{C}_{i}$ positive constants
that are independent of $t\in (0,T_{{\rm max}})$,
$t\in (\tau, T_{{\rm max}})$
and
$t\in (\tau, \widehat{T}_{{\rm max}})$,
respectively.
%and denote by $\widehat{C}_{i}$ positive constants
%that are independent of $t\in (\tau, \widehat{T}_{{\rm max}})$.
It should be noted that the proof relies on that of
\cite[Lemma 4.13]{LW}.
By setting $q=p\in (1,\infty )$ in \eqref{MR}, we see 
$$
\int^{t+\tau}_{t}
\left(
\|w(\,\cdot\,,s)\|^{p}_{W^{2,p}(\Omega )}
+\|w_{t}(\,\cdot\,,s)\|^{p}_{L^{p}(\Omega )}
\right)\,ds\le\widehat{C}_{1}\quad\mbox{for all}\ 
t\in (\tau, \widehat{T}_{{\rm max}})
$$
with some $\widehat{C}_{1}>0$.
By the embedding theorem \cite[Theorem 5.2]{Am4}
with suitably large $p$, one can find $\widetilde{C}_{1}$ such that
\begin{equation}\label{nw}
\|\nabla w(\,\cdot\, ,t)\|_{L^{\infty}(\Omega )}\le 
\widetilde{C}_{1}
\quad\mbox{for all}\ t\in (\tau, T_{{\rm max}}).
\end{equation}
Obviously, Lemma \ref{comlem} 
yields $C_{1}$ and $C_{2}$ such that
\begin{equation}\label{uv}
u(x,t)\le C_{1},\quad
v(x,t)\le C_{2}
\quad\mbox{for all}\ 
(x,t)\in \Omega\times (0, T_{{\rm max}}).
\end{equation}
In order to apply the H\"older regularity theory
for a class of quasilinear parabolic equations
by Porzio and Vespri \cite[Theorem 1.3 and Remark 1.4]{PV},
we note the following divergence form by substituting
\eqref{fd} and $\nabla v=(\nabla w-\nabla u)/\gamma$ into the first equation of \eqref{para}:
\begin{equation}\label{udiv}
\begin{split}
u_{t}&=
\nabla\cdot (\,d\,\nabla u+\alpha v\nabla u-\alpha u\nabla v\,)+
u(a_{1}-b_{1}u+c_{1}v)\\
&=
\nabla\cdot (\,d\,\nabla u+\alpha v\nabla u-\beta u\nabla w+\beta u\nabla u\,)
+
u(a_{1}-b_{1}u+c_{1}v)\\
&=
\nabla\cdot\{\,(\,d+\beta u+\alpha v\,)\nabla u-\beta u\nabla w\,\}
+
f(u,v),
\end{split}
\end{equation}
where $f(u,v):=u(a_{1}-b_{1}u+c_{1}v)$.
In view of \eqref{udiv},
we define
an $\mathbb{R}^{N}$ valued function 
$\boldsymbol{A}(x,t,\xi)$ and
a real valued function $B(x,t)$ as follows:
\begin{equation}
\begin{split}
\boldsymbol{A}(x,t,\xi):=
(d+\beta u(x,t) +\alpha v(x,t))\xi-
\beta u(x,t)\nabla w(x,t),
\quad&(x,t,\xi)\in\Omega\times
(0, T_{{\rm max}})\times\mathbb{R}^{N},\\
B(x,t):=f(u(x,t), v(x,t)),\quad
&(x,t)\in\Omega\times (0,T_{{\rm max}}).
\end{split}
\nonumber
\end{equation}
Hence \eqref{udiv} can be represented as
$$
u_{t}=\nabla\cdot\boldsymbol{A}(x,t,\nabla u)+B(x,t).
$$
Let us check the structural conditions for use of
\cite[Theorem 1.3 and Remark 1.4]{PV}.
By \eqref{nw}, \eqref{uv} and the Young inequality,
we have
\begin{equation}\label{stu1}
\begin{split}
\boldsymbol{A}(x,t,\nabla u)\cdot\nabla u&=
(d+\beta u+\alpha v)|\nabla u|^{2}-\beta u\nabla u\cdot\nabla w\\
&\ge
(d+\beta u+\alpha v)|\nabla u|^{2}-\beta u|\nabla u||\nabla w|\\
&\ge
(d+\alpha v)|\nabla u|^{2}-\dfrac{\beta}{4}u|\nabla w|^{2}\\
&\ge
d|\nabla u|^{2}-\dfrac{\beta}{4}C_{1}\widetilde{C}_{1}^{2}
\qquad\mbox{for all}\ 
(x,t)\in \Omega\times (\tau, T_{{\rm max}}).
\end{split}
\end{equation}
Furthermore it follows from \eqref{nw} and \eqref{uv} that
\begin{equation}\label{stu2}
|\boldsymbol{A}(x,t,\nabla u)|\le
(d+\beta C_{1}+\alpha C_{2})|\nabla u|+\beta C_{1}\widetilde{C}_{1}
\end{equation}
and
\begin{equation}\label{stu3}
|B(x,t)|\le C_{1}(a_{1}^{+}+c_{1}C_{2})
\end{equation}
for all $(x,t)\in\Omega\times (\tau,T_{{\rm max}})$.
%Then we can check all 
Owing to the structural conditions
\eqref{stu1}, \eqref{stu2} and \eqref{stu3},
one can use \cite[Theorem 1.3 and Remark 1.4]{PV} to
find $\theta\in (0,1)$ and $\widehat{C}_{2}$
%which is independent of $t\in (\tau, \widehat{T}_{{\rm max}})$,
such that
$$\|u\|_{C^{\theta,\theta /2}(\overline{\Omega}\times [t,t+\tau ])}\le 
\widehat{C}_{2}
\qquad\mbox{for all}\ t\in (\tau, \widehat{T}_{{\rm max}}).
$$
In the same manner, we can also get the desired H\"older estimate for $v$.
\end{proof}

\subsection{$W^{1,4}(\Omega )$ estimate}
We have seen that under the conditions $d_1=d_2$ and \eqref{weaker}, 
any nonnegative solution $(u,v)$ of \eqref{para} 
remains bounded in the sense that 
$\|u(\,\cdot\,,t)\|_{L^{\infty}(\Omega )}+\|v(\,\cdot\,,t)\|_{L^{\infty}(\Omega )}$ never blows up as $t \nearrow T_{{\rm max}}$.
It should be noted that this fact alone does not allow us to conclude
$T_{{\rm max}} = \infty$.
This is because, although the maximum values of $u$ and $v$
remain bounded as $t\nearrow T_{{\rm max}}$, 
the possibility of blowing up of any of partial derivatives of $u$ or $v$ 
below the second order cannot be ruled out.
In view of Theorem \ref{Amannthm}, we recall that
if $T_{{\rm max}}<\infty$, then
$\|u(\,\cdot\,,t)\|_{W^{1,p}(\Omega )}+\|v(\,\cdot\,,t)\|_{W^{1,p}(\Omega )}$
blows up as $t\nearrow T_{{\rm max}}$ for any 
$p > N$.  Therefore, focusing on its counterpart, 
if it can be shown that $\|u(\,\cdot\,,t)\|_{W^{1,p}(\Omega)}+\|v(\,\cdot\,,t)\|_{W^{1,p}(\Omega)}$ remains bounded as $t\nearrow T_{\rm max}$
with some $p>N$, then
we can conclude not only
$T_{{\rm max}}=\infty$ 
but also what the solution 
$(u,v)$ becomes a time-global classical solution.

Therefore, in this subsection, we show 
the uniform $W^{1,4}(\Omega )$ a priori estimate for any 
nonnegative solution $(u,v)$ with additional conditions that
$N\le 3$ and $\Omega $ is convex.
To this end, we note a useful inequality of the Gronwall type 
described in \cite[Lemma 3.4]{SSW}.

\begin{lem}[\cite{SSW}]\label{Grlem}
Let $T>0$ and $y(t)\in [0,\infty)$ be an
absolutely
continuous function for $t\in [0,T)$ satisfying
\begin{equation}\label{Gr}
\dfrac{d}{dt}y(t)+ay(t)\le h(t)\quad\mbox{for a.e.}\
t\in (0,T)
\end{equation}
with some $a>0$ and a nonnegative 
measurable function
$h (t)$ for which
there exists $b>0$ such that
$$
\int^{t+1}_{t}h(s)\,ds\le b
\quad\mbox{for all}\ t\in [0,T-1).$$
Then
\begin{equation}\label{ybdd}
y(t)\le\max\biggl\{
y(0)+b,
\dfrac{b}{a}+2b\biggr\}
\quad\mbox{for all}\ t\in (0,T).
\end{equation}
\end{lem}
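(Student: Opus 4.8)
The plan is to run the standard integrating-factor (Duhamel) argument for the linear differential inequality \eqref{Gr} and then close it by comparing with the supremum of $y$ over compact subintervals, so that the unit-interval bound on $h$ can be fed in one interval at a time.

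First I would rewrite \eqref{Gr} as $\frac{d}{dt}\bigl(e^{at}y(t)\bigr)\le e^{at}h(t)$ for a.e.\ $t\in(0,T)$ and integrate over $[t_{0},t]$ for an arbitrary pair $0\le t_{0}\le t<T$ to obtain
\[
y(t)\le e^{-a(t-t_{0})}\,y(t_{0})+\int_{t_{0}}^{t}e^{-a(t-s)}h(s)\,ds .
\]
Since $e^{-a(t-s)}\le 1$ when $s\le t$, the integral on the right over any subinterval of $[0,T)$ of length at most one is at most $b$; this is the only place the hypothesis on $h$ enters, and it makes it natural to keep $t-t_{0}\le 1$ (slicing a long interval into unit pieces and summing a geometric series is an alternative, but I avoid it below because the supremum argument gives a cleaner constant).

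Next, assuming $T>1$ — the range in which the statement has content and the only one needed in the sequel — I would fix $T'\in(1,T)$, set $\overline{y}:=\max_{t\in[0,T']}y(t)$, which is finite because $y$ is continuous on the compact interval $[0,T']$, and pick $t^{*}\in[0,T']$ where the maximum is attained. If $t^{*}\le 1$, take $t_{0}=0$ in the displayed inequality to get $\overline{y}=y(t^{*})\le y(0)+\int_{0}^{t^{*}}h(s)\,ds\le y(0)+b$. If $t^{*}>1$, then $t^{*}-1\in(0,T-1)$, so taking $t_{0}=t^{*}-1$ gives $\overline{y}=y(t^{*})\le e^{-a}y(t^{*}-1)+b\le e^{-a}\overline{y}+b$, hence $\overline{y}\le b/(1-e^{-a})$. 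Using the elementary inequality $1+a\le e^{a}$ (valid since $a>0$), which rearranges to $1/(1-e^{-a})\le 1+1/a$, this becomes $\overline{y}\le b/a+b$. In either case $\overline{y}\le\max\{\,y(0)+b,\ b/a+2b\,\}$, and since $T'\in(1,T)$ was arbitrary this yields \eqref{ybdd} for every $t\in[0,T)$.

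I do not expect a genuine obstacle; the only points needing attention are bookkeeping ones — making sure that whenever the unit-length bound on $h$ is invoked the left endpoint of the relevant interval lies in $[0,T-1)$, which is exactly what the split $t^{*}\le 1$ versus $t^{*}>1$ arranges, and observing that the degenerate range $T\le 1$, where the hypothesis on $h$ is vacuous, does not arise in the way the lemma is applied here.
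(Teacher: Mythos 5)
The paper does not prove this lemma; it is quoted verbatim from \cite[Lemma~3.4]{SSW}, so there is no in-paper argument to compare against. Your proof is correct and self-contained, and it is essentially the standard route (variation of constants plus a supremum/case-split to feed in the unit-interval bound on $h$ one window at a time), which is also in the spirit of the original. The key steps check out: the Duhamel inequality is legitimate for an absolutely continuous $y$; in the case $t^{*}>1$ you have $t^{*}-1\le T'-1<T-1$, so the hypothesis on $h$ applies to the window $[t^{*}-1,t^{*}]$ and $\int_{t^{*}-1}^{t^{*}}e^{-a(t^{*}-s)}h(s)\,ds\le b$; and the elementary bound $e^{a}\ge 1+a$ gives $1/(1-e^{-a})\le 1+1/a$, so $\overline{y}\le b/a+b$, which is even slightly sharper than the stated $b/a+2b$. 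In the case $t^{*}\le 1$ you correctly use $h\ge 0$ to dominate $\int_{0}^{t^{*}}h$ by $\int_{0}^{1}h\le b$. Your observation about the degenerate range $T\le 1$ is also apt: as literally stated the hypothesis on $h$ is then vacuous and the conclusion can fail, so the lemma implicitly assumes $T>1$ (in \cite{SSW} the window length $\tau$ is required to be smaller than $T$); this does not affect the application in Lemma~\ref{W14lem}, where the window length is adapted to $T_{\rm max}$.
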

Hereafter we derive \eqref{Gr}
for 
$y(t):=\|\nabla u(\,\cdot\,,t)\|_{L^4(\Omega)}^{4}
+\|\nabla v(\,\cdot\,,t)\|_{L^4(\Omega)}^{4}
%\qquad
%t\in [0, T_{{\rm max}})
$
with some $h(t)$
for the boundedness of $y(t)$ on $t\in (0,T_{{\rm max}})$ as \eqref{ybdd}.
The proof is inspired by the idea of Lou and Winkler \cite[Lemma 5.2]{LW} 
on the construction of time-global solutions of 
the Shigesada-Kawasaki-Teramoto model.

\begin{lem}\label{W14lem}
Let $\Omega$ be a bounded convex domain in $\mathbb{R}^{N}$
with $N\le 3$.
Assume $d_{1}=d_{2}\,(\,=:d)$ and \eqref{weaker}. 
For
nonnegative 
$u_{0}$,
$v_{0}\in W^{1,\infty}(\Omega )$,
let 
$(u,v)\in [\,C(\overline{\Omega}\times [0,T_{{\rm max}}))\cap
C^{2,1}(\overline{\Omega}\times (0,T_{{\rm max}}))\,]^{2}$
be the solution of \eqref{para}.
Then there exists $C>0$ such that
$C$ is independent of $t\in (0,T_{{\rm max}})$ and
$$
\|u(\,\cdot\,,t)\|_{W^{1,4}(\Omega )}+
\|v(\,\cdot\,,t)\|_{W^{1,4}(\Omega )}\le C
\quad\mbox{for all}\ t\in (0,T_{{\rm max}}).$$
\end{lem}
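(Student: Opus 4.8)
The plan is to verify, on the interval $(\tau,T_{{\rm max}})$, the hypotheses of the Gronwall-type Lemma \ref{Grlem} for
$$y(t):=\|\nabla u(\,\cdot\,,t)\|_{L^{4}(\Omega)}^{4}+\|\nabla v(\,\cdot\,,t)\|_{L^{4}(\Omega)}^{4},$$
and then to conclude by adding the trivial $L^{4}$ bound on $u$ and $v$ coming from the $L^{\infty}$ bounds of Lemma \ref{comlem}; the bounded initial slab $(0,\tau]$ is covered by the classical regularity of $(u,v)$ together with $u_{0},v_{0}\in W^{1,\infty}(\Omega)$. The starting observation is that \emph{both} equations take the common divergence form
$$z_{t}=\nabla\cdot\bigl(g\,\nabla z\bigr)-\beta\,\nabla\cdot\bigl(z\,\nabla w\bigr)+R_{z},\qquad g:=d+\beta w=d+\beta u+\alpha v,\qquad z\in\{u,v\},$$
where for $z=u$ this is \eqref{udiv} with $R_{u}=f(u,v)$, and for $z=v$ it follows by substituting $\nabla u=\nabla w-\gamma\nabla v$ into $\beta\nabla\cdot(u\nabla v-v\nabla u)$, with $R_{v}=v(a_{2}+b_{2}u-c_{2}v)$. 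By Lemma \ref{comlem} and \eqref{nw}, the quantities $g$ (which satisfies $d\le g$ and is bounded above), $\|w\|_{L^{\infty}(\Omega)}$, $\|\nabla w\|_{L^{\infty}(\Omega)}$, $\|u\|_{L^{\infty}(\Omega)}$ and $\|v\|_{L^{\infty}(\Omega)}$ are uniformly bounded on $(\tau,T_{{\rm max}})$, while by Lemma \ref{maxlem} the quantity $\int_{t}^{t+\tau}\|\Delta w(\,\cdot\,,s)\|_{L^{3}(\Omega)}^{3}\,ds$ is bounded uniformly in $t$; Lemma \ref{Hollem} supplies a uniform bound on $\|u\|_{C^{\theta,\theta/2}}+\|v\|_{C^{\theta,\theta/2}}$ over each slab $\overline\Omega\times[t,t+\tau]$.

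First I would test the $z$-equation against $-\nabla\cdot(|\nabla z|^{2}\nabla z)$. Since $\partial_{\nu}z=0$ on $\partial\Omega$, integration by parts gives
$$\frac{1}{4}\frac{d}{dt}\int_{\Omega}|\nabla z|^{4}\,dx=-\int_{\Omega}\Bigl(\nabla\cdot(g\nabla z)-\beta\nabla\cdot(z\nabla w)+R_{z}\Bigr)\,\nabla\cdot\bigl(|\nabla z|^{2}\nabla z\bigr)\,dx.$$
Expanding $\nabla\cdot(|\nabla z|^{2}\nabla z)=|\nabla z|^{2}\Delta z+\nabla|\nabla z|^{2}\cdot\nabla z$ and integrating by parts once more in the terms carrying the highest derivatives of $z$, the leading contribution is a nonnegative dissipation $c_{0}\int_{\Omega}|\nabla z|^{2}|D^{2}z|^{2}\,dx$ with $c_{0}>0$ depending only on $d$, up to a boundary integral proportional to $\int_{\partial\Omega}|\nabla z|^{2}\,\partial_{\nu}|\nabla z|^{2}\,dS$ and up to terms of lower differential order. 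Here convexity of $\Omega$ is used: since $\partial_{\nu}z=0$ on $\partial\Omega$, convexity forces $\partial_{\nu}|\nabla z|^{2}\le0$ on $\partial\Omega$, so this boundary integral has the favorable sign and is discarded. The remaining ``bad'' terms are produced by $\nabla g=\beta\nabla u+\alpha\nabla v$, by $z\nabla w$ (including the piece $-\beta z\Delta w$) and by $\nabla R_{z}$; after invoking the pointwise bounds above they are dominated by finitely many integrals of the schematic types
$$\int_{\Omega}(|\nabla u|+|\nabla v|)|\nabla z|^{3},\quad\int_{\Omega}|\nabla z|^{3}|D^{2}z|,\quad\int_{\Omega}|\nabla z|^{2}|D^{2}z|,\quad\int_{\Omega}|\Delta w||\nabla z|^{3},\quad\int_{\Omega}|\Delta w||\nabla z|^{2}|D^{2}z|,$$
together with the analogous list obtained by interchanging $u$ and $v$.

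The third step is to absorb every such term, after summing the identities for $u$ and $v$, into the expression
$$\tfrac{c_{0}}{2}\int_{\Omega}|\nabla u|^{2}|D^{2}u|^{2}\,dx+\tfrac{c_{0}}{2}\int_{\Omega}|\nabla v|^{2}|D^{2}v|^{2}\,dx+a\,y(t)+h(t),\qquad h(t):=C\bigl(1+\|\Delta w(\,\cdot\,,t)\|_{L^{3}(\Omega)}^{3}\bigr),$$
using Young's inequality with small parameters, the Gagliardo--Nirenberg inequalities valid for $N\le3$ (in particular $|\nabla z|^{2}\in W^{1,2}(\Omega)\hookrightarrow L^{6}(\Omega)$, with norm controlled by the dissipation plus a power of $y(t)$, hence $\nabla z\in L^{12}(\Omega)$, which leaves ample room for the two $\Delta w$-pairings -- these being matched against the time-integrable $L^{3}$ quantity of Lemma \ref{maxlem} rather than against the dissipation), and, for the lowest-order pieces of the form $\int_{\Omega}|\nabla z|^{4}$ that do not see $D^{2}z$, an Ehrling-type interpolation whose lowest-order term is a power of $\|z\|_{C^{\theta}(\overline\Omega)}$, which is legitimate precisely because of Lemma \ref{Hollem}. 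Dropping the now-nonnegative dissipation then yields
$$\frac{d}{dt}y(t)+a\,y(t)\le h(t)\quad\text{for a.e. }t\in(\tau,T_{{\rm max}}),\qquad\int_{t}^{t+\tau}h(s)\,ds\le b,$$
the last bound being Lemma \ref{maxlem}, and Lemma \ref{Grlem} (with the unit window replaced by $\tau$) gives $\sup_{t\in(\tau,T_{{\rm max}})}y(t)<\infty$, which completes the proof.

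I expect the main obstacle to be the exponent bookkeeping in the third step: arranging that each bad term lands below $\varepsilon\int_{\Omega}|\nabla u|^{2}|D^{2}u|^{2}+\varepsilon\int_{\Omega}|\nabla v|^{2}|D^{2}v|^{2}+\tfrac{a}{2}y(t)+h(t)$, with particular care for the two terms containing $\Delta w$ (which must be paired with the \emph{time-integrable} $L^{3}$ control, not absorbed into the dissipation) and for the cross terms mixing $\nabla u$ and $\nabla v$ (which must be split symmetrically so that neither dissipation term is overspent). The hypothesis $N\le3$ enters only through the admissibility of these Gagliardo--Nirenberg exponents, and convexity of $\Omega$ only through the sign of the boundary integral; the rest is a lengthy but routine computation modeled on \cite[Lemma 5.2]{LW}.
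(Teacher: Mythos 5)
Your proposal is correct and follows essentially the same route as the paper: the same $L^{4}$ gradient energy $y(t)$, convexity used for the sign of $\int_{\partial\Omega}|\nabla z|^{2}\partial_{\nu}|\nabla z|^{2}$, the Ehrling-type interpolation justified by the H\"older bound of Lemma \ref{Hollem}, the time-integrated $\|\Delta w\|_{L^{3}}^{3}$ control from Lemma \ref{maxlem}, and the Gronwall lemma \ref{Grlem}, with the initial slab $[0,\tau]$ covered by classical regularity. The only difference is presentational: you package both equations as $z_{t}=\nabla\cdot(g\nabla z)-\beta\nabla\cdot(z\nabla w)+R_{z}$ with $g=d+\beta w\ge d$, so the dangerous second-order terms sit inside a positive-definite principal part from the start, whereas the paper keeps $\alpha(v\Delta u-u\Delta v)$ and exhibits the cancellation of the $\pm\beta\int_{\Omega}w|\nabla u|^{2}|\Delta u|^{2}$ contributions by hand after substituting $\Delta v=(\Delta w-\Delta u)/\gamma$ --- the two computations are equivalent.
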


\begin{proof}
In the proof, $x$ and the dot symbol for the spatial variable 
and $t$ for the time variable are omitted 
unless there is confusion.
Substituting 
the first equation of \eqref{para} into
$$
\dfrac{1}{4}\dfrac{d}{dt}
\|\nabla u\|^{4}_{L^{4}(\Omega )}
%+\|\nabla u|^{4}_{L^{4}(\Omega )}
=
\displaystyle\int_{\Omega}|\nabla u|^{2}
\nabla u\cdot
\nabla u_{t},
%+\|\nabla u(t)|^{4}_{L^{4}(\Omega )
$$
we get
\begin{equation}\label{energy}
\begin{split}
&\dfrac{1}{4}\dfrac{d}{dt}
\|\nabla u\|^{4}_{L^{4}(\Omega )}
-d\int_{\Omega}|\nabla u|^{2}\nabla u\cdot\nabla\Delta u\\
=\,&
\alpha\int_{\Omega}|\nabla u|^{2}\nabla u\cdot
\nabla (v\Delta u-u\Delta v)
+\int_{\Omega}|\nabla u|^{2}\nabla u\cdot
\nabla f(u,v),
\end{split}
\end{equation}
where $f(u,v)=u(a_{1}-b_{1}u+c_{1}v)$.
Substituting the identity
$$\nabla u\cdot\nabla\Delta u=\dfrac{1}{2}
\Delta |\nabla u|^{2}-|D^{2}u|^{2}$$ 
into the
random diffusion term gives
$$-d\int_{\Omega}|\nabla u|^{2}\nabla u\cdot\nabla\Delta u=
d\,\biggl(
-\dfrac{1}{2}\int_{\Omega}|\nabla u|^{2}\Delta|\nabla u|^{2}
+\int_{\Omega}|\nabla u|^{2}|D^{2}u|^{2}\biggr).
$$
We integrate by parts in the first term of the right-hand side 
to get
$$
\int_{\Omega}|\nabla u|^{2}
\Delta|\nabla u|^{2}=
\int_{\partial\Omega}
|\nabla u|^{2}\dfrac{\partial |\nabla u|^{2}}{\partial\nu}
-\int_{\Omega}
|\nabla |\nabla u|^{2}|^{2}.
$$
Since  $\partial |\nabla u|^{2}/\partial\nu\le 0$
on $\partial\Omega$ when $\Omega $ is convex
(e.g., \cite[Lemma 1.1]{Li}), then
$$-d\int_{\Omega}|\nabla u|^{2}\nabla u\cdot\nabla\Delta u
\ge
d\,
\int_{\Omega}|\nabla u|^{2}|D^{2}u|^{2}.
$$
Then, together with integration by parts in the first term of
the right-hand side of \eqref{energy}, we obtain
\begin{equation}\label{energy2}
\dfrac{1}{4}\dfrac{d}{dt}
\|\nabla u\|^{4}_{L^{4}(\Omega )}
+d\,
\int_{\Omega}|\nabla u|^{2}|D^{2}u|^{2}
\le
I(t)+J(t)+\int_{\Omega}|\nabla u|^{2}\nabla u\cdot\nabla f(u,v),
\end{equation}
where
$$
I:=
-\alpha\int_{\Omega} (v\Delta u-u\Delta v)|\nabla u|^{2}\Delta u,\qquad
J:=
-\alpha\int_{\Omega} (v\Delta u-u\Delta v)\nabla|\nabla u|^{2}\cdot
\nabla u.
$$
Substituting 
$\Delta v=(\Delta w-\Delta u)/\gamma$
and $\gamma v+u=w$
into $I$ gives
\begin{equation}\label{Iexp}
I=-\beta
\int_{\Omega}w|\nabla u|^{2}|\Delta u|^{2}+
\beta
\int_{\Omega}u|\nabla u|^{2}\Delta u\Delta w.
\end{equation}
Substituting 
$\Delta v=(\Delta w-\Delta u)/\gamma$
into $J$ gives
$$
J=\int_{\Omega }
(-\alpha v\Delta u+\beta u\Delta w-\beta u\Delta u)
\nabla |\nabla u|^{2}\cdot\nabla u.
$$
Owing to the facts that $\partial |\nabla u|^{2}/\partial\nu\le 0$
on $\partial\Omega$ and $\nabla \Delta u\cdot\nabla u=
\frac{1}{2}\Delta |\nabla u|^{2}-|D^{2}u|^{2}$, 
straight forward calculations using integration by parts lead to
\begin{equation}
\begin{split}
J&\le\,
\alpha\int_{\Omega}\,\biggl(\,v|\nabla u|^{2}|\Delta u|^{2}+
|\nabla u|^{2}(\nabla u\cdot\nabla v)\Delta u-
\dfrac{1}{2}|\nabla u|^{2}\nabla v\cdot\nabla |\nabla u|^{2}\,
\biggr)\\
&
+\beta\int_{\Omega}
u\Delta w\,\nabla |\nabla u|^{2}\cdot\nabla u\\
&+\beta\int_{\Omega}\biggl(
\,u|\nabla u|^{2}|\Delta u|^{2}+
|\nabla u|^{4}\Delta u-
\dfrac{1}{2}|\nabla u|^{2}\nabla u\cdot\nabla |\nabla u|^{2}\,
\biggr).
\end{split}
\nonumber
\end{equation}
Since $\beta u+\alpha v=\beta w$, we see
\begin{equation}\label{Jexp}
\begin{split}
J&\le
\beta
\int_{\Omega} w|\nabla u|^{2}|\Delta u|^{2}
+
\alpha\int_{\Omega}\biggl(|\nabla u|^{2}(\nabla u\cdot\nabla v)\Delta u-
\dfrac{1}{2}\,|\nabla u|^{2}\nabla v\cdot\nabla |\nabla u|^{2}
\biggr)\\
&
+\beta\int_{\Omega}\biggl(
u\Delta w\,\nabla |\nabla u|^{2}\cdot\nabla u
+
|\nabla u|^{4}\Delta u-
\dfrac{1}{2}\,|\nabla u|^{2}\nabla u\cdot\nabla |\nabla u|^{2}
\biggr).
\end{split}
\end{equation}
Adding \eqref{Iexp} and \eqref{Jexp} cancels each other's first terms 
in the right-hand sides and results in
\begin{equation}
\begin{split}
&I+J\\
&\le
\beta\int_{\Omega }u|\nabla u|^{2}\Delta u\Delta w+
\alpha\int_{\Omega}|\nabla u|^{2}(\nabla u\cdot\nabla v)\Delta u
-\dfrac{\alpha}{2}\int_{\Omega }|\nabla u|^{2}\nabla v\cdot\nabla
|\nabla u|^{2}\\
&+\beta\int_{\Omega }u\Delta w\,\nabla |\nabla u|^{2}\cdot\nabla u+
\beta\int_{\Omega }|\nabla u|^{4}\Delta u-\dfrac{\beta }{2}\int_{\Omega}
|\nabla u|^{2}\nabla u\cdot\nabla |\nabla u|^{2}\\
&=:I_{1}+J_{2}+J_{3}+J_{4}+J_{5}+J_{6}.
\end{split}
\nonumber
\end{equation}
By virtue of $|\Delta u|\le\sqrt{N}|D^{2}u|$ and
$\nabla|\nabla u|^{2}=2(\nabla u)(D^{2} u)$,
the Young and the H\"older inequalities yield
\begin{equation}
\begin{split}
&I_{1}
%=\beta\displaystyle\int_{\Omega }u|\nabla u|^{2}\Delta u\Delta w
\le 
\dfrac{d}{2}\displaystyle\int_{\Omega}|\nabla u|^{2}|D^{2}u|^{2}+
\dfrac{N\beta^{2}\|u\|_{L^{\infty}(\Omega )}^{2}}{2d}\left(
\|\nabla u\|^{6}_{L^{6}(\Omega )}+\|\Delta w\|_{L^{3}(\Omega )}^{3}\right),
\\
&J_{2}\le
\dfrac{d}{4}\displaystyle\int_{\Omega}|\nabla u|^{2}|D^{2}u|^{2}+
\dfrac{N\alpha^{2}}{d}\left(\|\nabla u\|_{L^{6}(\Omega )}^{6}+
\|\nabla v\|_{L^{6}(\Omega )}^{6}\right),
\\
&J_{3}\le
\dfrac{d}{8}\displaystyle\int_{\Omega}|\nabla u|^{2}|D^{2}u|^{2}+
\dfrac{2\alpha^{2}}{d}\left(\|\nabla u\|_{L^{6}(\Omega )}^{6}+
\|\nabla v\|_{L^{6}(\Omega )}^{6}\right),
\vspace{1mm}\\
&J_{4}\le
\dfrac{d}{16}\displaystyle\int_{\Omega}|\nabla u|^{2}|D^{2}u|^{2}+
\dfrac{16\beta^{2}\|u\|_{L^{\infty}(\Omega )}^{2}}{d}\left(\|\nabla u\|_{L^{6}(\Omega )}^{6}+
\|\Delta w\|_{L^{3}(\Omega )}^{3}\right),
\\
&J_{5}\le
\dfrac{d}{32}\displaystyle\int_{\Omega}|\nabla u|^{2}|D^{2}u|^{2}+
\dfrac{8N\beta^{2}}{d} \|\nabla u\|_{L^{6}(\Omega )}^{6},
\\
&J_{6}\le
\dfrac{d}{64}\displaystyle\int_{\Omega}|\nabla u|^{2}|D^{2}u|^{2}+
\dfrac{16\beta^{2}}{d}\|\nabla u\|_{L^{6}(\Omega )}^{6}
\end{split}
\nonumber
\end{equation}
for all $t\in (0,T_{{\rm max}})$.
Recalling Lemma \ref{comlem}, we note
\begin{equation}\label{Linf}
\|u\|_{L^{\infty}(\Omega )},\ \ 
\gamma \|v\|_{L^{\infty}(\Omega )}\le
\max\biggl\{
\|u_{0}+\gamma v_{0}\|_{L^{\infty}(\Omega )},\,
\dfrac{(1+\gamma^{2})\widetilde{a}}{\lambda^{*}}\biggr\}
\end{equation}
for all $t\in (0,T_{{\rm max}})$.
In what follows in the proof,
$C_{i}$ represent positive constants depending on
$(d,\alpha, \beta, a_{i}, b_{i}, c_{i}, N, |\Omega|, \|u_{0}+\gamma v_{0}\|_{L^{\infty}(\Omega )})$.
It is possible to find $C_{1}$ such that
$$\displaystyle\int_{\Omega}|\nabla u|^{2}\nabla u\cdot\nabla f(u,v)
+\|\nabla u\|^{4}_{L^{4}(\Omega )}
\le C_{1}(\|\nabla u\|_{L^{6}(\Omega )}^{6}+
\|\nabla v\|_{L^{6}(\Omega )}^{6}+1).$$
Therefore, we add $\|\nabla u\|^{4}_{L^{4}(\Omega )}$
in both sides of \eqref{energy2} to obtain
\begin{equation}
\begin{split}
&\dfrac{1}{4}\dfrac{d}{dt}
\|\nabla u\|^{4}_{L^{4}(\Omega )}
+\|\nabla u\|^{4}_{L^{4}(\Omega )}
+\dfrac{d}{64}
\int_{\Omega}|\nabla u|^{2}|D^{2}u|^{2}\\
&\le
C_{2}
\left(
\|\nabla u\|^{6}_{L^{6}(\Omega )}+
\|\nabla v\|^{6}_{L^{6}(\Omega )}+
\|\Delta w\|^{3}_{L^{3}(\Omega )}+1
\right)
\quad\mbox{for all}\ t\in (0,T_{{\rm max}})
\end{split}
\nonumber
\end{equation}
with some $C_{2}$.
Together with the same procedure to the second equation of 
\eqref{para}, we find $C_{3}$ such that
\begin{equation}\label{energy3}
\begin{split}
&\dfrac{1}{4}\dfrac{d}{dt}
\left(\|\nabla u\|^{4}_{L^{4}(\Omega )}
+\|\nabla v\|^{4}_{L^{4}(\Omega )}\right)
+\|\nabla u\|_{L^{4}(\Omega )}^{4}+
\|\nabla v\|_{L^{4}(\Omega )}^{4}
+\dfrac{d}{64}\int_{\Omega }
(|\nabla u|^{2}|D^{2}u|^{2}+
|\nabla v|^{2}|D^{2}v|^{2})\\
&\le
C_{3}
\left(
\|\nabla u\|^{6}_{L^{6}(\Omega )}+
\|\nabla v\|^{6}_{L^{6}(\Omega )}+
\|\Delta w\|^{3}_{L^{3}(\Omega )}+1
\right)
\quad\mbox{for all}\ t\in (0,T_{{\rm max}}).
\end{split}
\end{equation}
Thanks to Lemma \ref{Hollem},
we can use
the Ehrling-type lemma \cite[Lemma 5.1]{LW} to obtain 
\begin{equation}\label{Ehrling}
\begin{split}
&
\|\nabla u\|^{6}_{L^{6}(\Omega )}
\le\dfrac{d}{64}\int_{\Omega}|\nabla u|^{2}
|D^{2}u|^{2}+C_{4}\|u\|^{6}_{L^{\infty}(\Omega )},\\
&
\|\nabla v\|^{6}_{L^{6}(\Omega )}
\le\dfrac{d}{64}\int_{\Omega}|\nabla v|^{2}
|D^{2}v|^{2}+C_{4}\|v\|^{6}_{L^{\infty}(\Omega )}
\end{split}
\end{equation}
with some $C_{4}$.
From \eqref{Linf}, \eqref{energy3} and \eqref{Ehrling},
we find $C_{5}$ such that
$$y(t):=\|\nabla u(\,\cdot\,,t)\|^{4}_{L^{4}(\Omega )}+
\|\nabla v(\,\cdot\,,t)\|^{4}_{L^{4}(\Omega )}$$
satisfies
$$
\dfrac{1}{4}\dfrac{d}{dt}y(t)+y(t)\le C_{5}(
\|\Delta w\|^{3}_{L^{3}(\Omega )}+1)\quad
\mbox{for all}\quad t\in (0,T_{{\rm max}}).
$$
By Lemma \ref{maxlem}, there exists $C_{6}$ such that
$$
\int^{t+\tau}_{t}
\|\Delta w(\,\cdot\,,s)\|^{3}_{L^{3}(\Omega )}\,ds\le
C_{6}
\quad\mbox{for all}\ t\in 
(\tau,\widehat{T}_{{\rm max}}),
$$
where $\tau $ and $\widehat{T}_{{\rm max}}$
are defined by \eqref{taudef}.
By Lemma \ref{Grlem},
there exists $C_{7}$ such that
$$
y(t)=\|\nabla u(\,\cdot\,,t)\|^{4}_{L^{4}(\Omega )}+
\|\nabla v(\,\cdot\,,t)\|^{4}_{L^{4}(\Omega )}\le C_{7}
\quad\mbox{for all}\ t\in (\tau, T_{{\rm max}}).
$$
By virtue of Theorem \ref{Amannthm},
there exists $C_{8}$ 
%depending on 
%$(u_{0}, v_{0})$ 
such that 
$y(t)\le C_{8}$ for all $t\in [0,\tau]$.
Together with \eqref{Linf},
we obtain the desired estimate.
\end{proof}

\begin{proof}[Proof of Theorem \ref{1stthm}]
Suppose by contradiction that $T_{{\rm max}}<\infty$.
Then \eqref{Amanncri} has to follow by Theorem \ref{Amannthm}.
On the other hand, Lemma \ref{W14lem} proves that
\eqref{Amanncri} never occurs under the assumptions of Theorem \ref{1stthm}.
This contradiction enables us to conclude 
$T_{{\rm max}}=\infty$.
Furthermore,
Lemma \ref{comlem} asserts that,
if $\widetilde{a}=\max\{a_{1}^{+},a_{2}^{+}\}=0$,
namely, $a_{1}$, $a_{2}\le 0$, then
all nonnegative solutions of \eqref{para}
asymptotically decay to zero
in the sense that
for any $\varepsilon>0$,
there exists 
$t_{0}=t_{0}(\varepsilon, \|u_{0}+\gamma v_{0}\|_{L^{\infty}(\Omega )})>0$ such that 
$$
t\ge t_{0}\ \Longrightarrow\ 
\|u(\,\cdot\,,t)\|_{L^{\infty}(\Omega )}+\gamma
\|v(\,\cdot\,,t)\|_{L^{\infty}(\Omega )}\le 
\varepsilon;$$
whereas if 
$\widetilde{a}=\max\{a_{1}^{+},a_{2}^{+}\}>0$,
then there exists
$t_{0}=t_{0}(\|u_{0}+\gamma v_{0}\|_{L^{\infty}(\Omega )})>0$ such that 
$$
t\ge t_{0}\ \Longrightarrow\ 
\|u(\,\cdot\,,t)\|_{L^{\infty}(\Omega )}+\gamma
\|v(\,\cdot\,,t)\|_{L^{\infty}(\Omega )}\le 
\dfrac{2(1+\gamma^{2})\widetilde{a}}{\lambda^{*}}.$$
Then \eqref{abs} holds true with
\begin{equation}\label{Kdef}
K=
\begin{cases}
\varepsilon \quad&\mbox{if}\ 
a_{1}\le 0\ \mbox{and}\ a_{2}\le 0,\\
\frac{2(1+\gamma^{2})\widetilde{a}}{\lambda^{*}}\quad&\mbox{if}\ 
a_{1}>0\ \mbox{or}\ a_{2}> 0.
\end{cases}
\end{equation}
The proof of Theorem \ref{1stthm} is complete.
\end{proof}

\section{Global asymptotic stability of the positive constant 
steady state}
In this section, we give a proof of Theorem \ref{2ndthm}.
In order to show the global asymptotic stability of 
the positive constant steady state $(u^{*}, v^{*})$ 
stated as \eqref{L2aym},
we construct a Lyapunov function.
To do so, we introduce a nonnegative function
$H(\eta; \xi )$ as follows:
$$
H(\eta; \xi):=\eta-\xi-\xi\log\dfrac{\eta}{\xi}
\qquad 
\mbox{for}\ \eta>0,\ \xi>0.
$$
Hence $(0,\infty)\ni\eta\mapsto H(\eta; \xi)\in [0,\infty )$
is monotone decreasing for $\eta\in (0,\xi)$,
attains the minimum zero at $\eta=\xi$
and
is monotone increasing for $\eta\in (\xi,\infty )$
with 
$\lim_{\eta\searrow 0}H(\eta; \xi)=
\lim_{\eta\to\infty}H(\eta; \xi)=\infty$.
Under the assumptions
of Theorem \ref{2ndthm},
let $(u,v)$ be the positive solution
of \eqref{para}
(which exists globally in time by Theorem \ref{1stthm}).
For the solution $(u,v)$, we define an energy functional 
$\mathcal{F}(t)$ as follows:
$$
\mathcal{F}(t):=
\int_{\Omega}
\{\,
H(u(x,t); u^{*})+\gamma
H(v(x,t); v^{*})\,\},
$$
where the positive constant steady state
$(u^{*}, v^{*})$ exists in the form \eqref{const}
when \eqref{weaker} and either of (i)-(iii) in \eqref{weakconst} 
are assumed.
We further define a function $\mathcal{D}(t)$ 
that measures the $L^{2}(\Omega )$ distance between 
$(u, v)$ and $(u^{*}, v^{*})$ as follows:
$$
\mathcal{D}(t):=
\|u(\,\cdot\,,t)-u^{*}\|_{L^{2}(\Omega )}^{2}+
\|v(\,\cdot\,,t)-v^{*}\|_{L^{2}(\Omega )}^{2}.
$$
It will be shown that 
the energy function $\mathcal{F}(t)$ is monotone decreasing
along the trajectory $(u(\,\cdot\,,t), v(\,\cdot\,,t))$
for $t>0$, and moreover,
the derivative $\mathcal{F}'(t)$ is
less than a negative multiple of the $L^{2}(\Omega )$ distance
$\mathcal{D}(t)$ between 
$(u(\,\cdot\,,t), v(\,\cdot\,,t))$ and
$(u^{*}, v^{*})$ if $t>0$ is sufficiently large.
This fact together with
the uniform continuity of $\mathcal{D}(t)$ ensured by 
Lemma \ref{Hollem} will show the convergence
\eqref{L2aym}.

\begin{lem}\label{dFlem}
Let $d_{1}=d_{2}\,(\,=:d\,)$ and
$\Omega$ be a bounded convex domain
in $\mathbb{R}^{N}$
with $N\le 3$.
Assume \eqref{weaker} and either of (i)-(iii) in \eqref{weakconst}.
Then, if 
$$
d\ge
\dfrac{(\beta u^{*}+\alpha v^{*}) K}
{2\sqrt{\gamma u^{*}v^{*}}},
$$
then any positive solution 
$(u, v)$ of 
\eqref{para}
fulfills
\begin{equation}\label{Ft}
\dfrac{d}{dt}\mathcal{F}(t)+\lambda^{*}\mathcal{D}(t)\le 0
\quad\mbox{for all}\ t\in [t_{0},\infty),
\end{equation}
where 
$(u^{*}, v^{*})$,
$\lambda^{*}$,
$K$ 
and 
$t_{0}$
are introduced by 
\eqref{const},
\eqref{lamdef},
\eqref{Kdef}
and 
\eqref{abs}, respectively.
\end{lem}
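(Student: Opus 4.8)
The plan is to differentiate $\mathcal F$ along the trajectory, integrate by parts once, and split $\mathcal F'(t)$ into a diffusion part and a reaction part; only the first part will see the size condition on $d$, and only through the absorbing bound \eqref{abs} (which is where the convexity of $\Omega$ and $N\le 3$, via Theorem \ref{1stthm}, intervene). Since $\partial_\eta H(\eta;\xi)=1-\xi/\eta$ and a positive classical solution stays smooth and bounded away from $0$ on $\overline\Omega\times[t_0,T]$ for every $T>t_0$ — so that differentiation under the integral and division by $u,v$ are legitimate — one has
\[
\mathcal F'(t)=\int_\Omega\Bigl(1-\frac{u^*}{u}\Bigr)u_t+\gamma\int_\Omega\Bigl(1-\frac{v^*}{v}\Bigr)v_t .
\]
Using the divergence form of \eqref{para}, integrating by parts (the homogeneous Neumann conditions kill every boundary term) and the identities $\nabla(1-u^*/u)=(u^*/u^2)\nabla u$, $\nabla(1-v^*/v)=(v^*/v^2)\nabla v$ together with $\alpha=\gamma\beta$, the diffusion terms contract to
\[
\mathcal I(t):=-\int_\Omega\Bigl\{u^*(d+\alpha v)\,|p|^2-\alpha(u^*v+v^*u)\,p\cdot q+\gamma v^*(d+\beta u)\,|q|^2\Bigr\},\qquad p:=\frac{\nabla u}{u},\ \ q:=\frac{\nabla v}{v},
\]
while the reaction terms give $\mathcal R(t):=\int_\Omega(u-u^*)(a_1-b_1u+c_1v)+\gamma\int_\Omega(v-v^*)(a_2+b_2u-c_2v)$, and $\mathcal F'(t)=\mathcal I(t)+\mathcal R(t)$.

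I would next dispose of $\mathcal R(t)$. Since $(u^*,v^*)$ solves the kinetic system, $a_1-b_1u+c_1v=-b_1(u-u^*)+c_1(v-v^*)$ and $a_2+b_2u-c_2v=b_2(u-u^*)-c_2(v-v^*)$; with $U:=u-u^*$, $V:=v-v^*$ this rewrites $\mathcal R(t)$ as $-\int_\Omega(U,V)\,A\,(U,V)^{T}$, $A$ being precisely the matrix \eqref{Adef}. The ellipticity \eqref{lamdef} then yields $\mathcal R(t)\le-\lambda^*\int_\Omega(U^2+V^2)=-\lambda^*\mathcal D(t)$. Hence $\mathcal F'(t)+\lambda^*\mathcal D(t)\le\mathcal I(t)$, and it remains to prove $\mathcal I(t)\le 0$ for $t\ge t_0$.

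This is the heart of the matter. The integrand between the braces defining $-\mathcal I(t)$ is, at each point, the quadratic form $Q(p,q)=u^*(d+\alpha v)|p|^2-\alpha(u^*v+v^*u)\,p\cdot q+\gamma v^*(d+\beta u)|q|^2$ on $p,q\in\mathbb R^N$; as its cross coefficient is positive, Cauchy--Schwarz shows $Q\ge 0$ for all $p,q$ as soon as the discriminant inequality $\alpha^2(u^*v+v^*u)^2\le 4\gamma u^*v^*(d+\alpha v)(d+\beta u)$ holds there. For $t\ge t_0$ the absorbing bound \eqref{abs} gives $u+\gamma v\le K$ pointwise, whence $u\le K$, $v\le K/\gamma$, so $\alpha(u^*v+v^*u)\le\alpha(Ku^*/\gamma+Kv^*)=K(\beta u^*+\alpha v^*)$ by $\alpha=\gamma\beta$, while $4\gamma u^*v^*(d+\alpha v)(d+\beta u)\ge 4\gamma u^*v^*d^2$. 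The discriminant inequality therefore holds whenever $K^2(\beta u^*+\alpha v^*)^2\le 4\gamma u^*v^*d^2$, i.e. exactly when $d\ge(\beta u^*+\alpha v^*)K/(2\sqrt{\gamma u^*v^*})$ — the hypothesis of the lemma. Thus $Q\ge 0$ everywhere, $\mathcal I(t)\le 0$ on $[t_0,\infty)$, and \eqref{Ft} follows. The main obstacle is the bookkeeping in extracting $Q$ from the integration by parts — keeping the weights $u^*,v^*,u,v$ and the identity $\alpha=\gamma\beta$ straight — so that the discriminant criterion collapses to precisely the stated threshold for $d$; the only other delicate point is the a priori positivity and interior regularity of $(u,v)$ that legitimizes the whole computation.
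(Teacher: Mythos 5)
Your proposal is correct and follows essentially the same route as the paper: differentiate $\mathcal F$, integrate by parts to split off the reaction part as $-\int_\Omega \boldsymbol{X}A\boldsymbol{X}^T\le-\lambda^*\mathcal D(t)$ via \eqref{lamdef}, and control the diffusion part by showing the pointwise quadratic form in $(\nabla u/u,\nabla v/v)$ is nonnegative once \eqref{abs} gives $u\le K$, $v\le K/\gamma$ for $t\ge t_0$. The only cosmetic difference is that you verify nonnegativity via the discriminant/Cauchy--Schwarz criterion where the paper computes $\det B$ of the corresponding matrix $B$; these are the same computation.
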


\begin{proof}
From the first equation of \eqref{para} and
the fact that $a_{1}-b_{1}u^{*}+c_{1}v^{*}=0$,
one can verify that any solution $(u,v)$ 
of \eqref{para}
satisfies
\begin{equation}\label{Hdu}
\begin{split}
\dfrac{d}{dt}
\int_{\Omega }
H(u(x,t); u^{*})
=&
\int_{\Omega}
\dfrac{\partial}{\partial t}
\biggl(
u-u^{*}-u^{*}\log\dfrac{u}{u^{*}}\biggr)
=\int_{\Omega}
\biggl(1-\dfrac{u^{*}}{u}\biggr)u_{t}\\
=&
-u^{*}\int_{\Omega}
(d+\alpha v)
\biggl|
\dfrac{\nabla u}{u}\biggr|^{2}
+
\alpha u^{*}\int_{\Omega}
\dfrac{\nabla u\cdot\nabla v}{u}+
\int_{\Omega }
(u-u^{*})(a_{1}-b_{1}u+c_{1}v)\\
=&
-u^{*}\int_{\Omega }(d+\alpha v)\biggl|
\dfrac{\nabla u}{u}\biggr|^{2}
+\alpha u^{*}\int_{\Omega}
\dfrac{\nabla u\cdot\nabla v}{u}\\
&-b_{1}\int_{\Omega}(u-u^{*})^{2}+
c_{1}\int_{\Omega }(u-u^{*})(v-v^{*})
\qquad\mbox{for all}\ t>0.
\end{split}
\end{equation}
Similarly, the second equation of \eqref{para} and
the fact that $a_{2}+b_{2}u^{*}-c_{2}v^{*}=0$ yield
\begin{equation}\label{Hdv}
\begin{split}
\dfrac{d}{dt}
\int_{\Omega }
H(v(x,t); v^{*})
=&
-v^{*}\int_{\Omega }(d+\beta u)\biggl|
\dfrac{\nabla v}{v}\biggr|^{2}
+\beta v^{*}\int_{\Omega}
\dfrac{\nabla u\cdot\nabla v}{v}\\
&-c_{2}\int_{\Omega}(v-v^{*})^{2}+
b_{2}\int_{\Omega }(u-u^{*})(v-v^{*})
\qquad\mbox{for all}\ t>0.
\end{split}
\end{equation}
By adding $\gamma$ times of \eqref{Hdv} to \eqref{Hdu}, 
we obtain the integral of quadratic forms as follows:
\begin{equation}\label{dF}
\dfrac{d}{dt}\underbrace{\int_{\Omega}
\{\,H(u(x,t), u^{*})+\gamma H(v(x,t), v^{*})\,\}}_{\mathcal{F}(t)}
=-\int_{\Omega}\boldsymbol{X}A\boldsymbol{X}^{T}
-\sum^{N}_{j=1}\int_{\Omega}\boldsymbol{Y}_{j}B\boldsymbol{Y}_{j}^{T},
\end{equation}
where
$$
\boldsymbol{X}=\boldsymbol{X}(x,t):=(u(x,t)-u^{*}, v(x,t)-v^{*}),\quad
\boldsymbol{Y}_{j}=\boldsymbol{Y}_{j}(x,t):=
\biggl(
\dfrac{1}{u}\biggl(\dfrac{\partial u}{\partial x_{j}}\biggr),
\dfrac{1}{v}\biggl(\dfrac{\partial v}{\partial x_{j}}\biggr)
\biggr),
$$
and
$$
%A=\left[
%\begin{array}{cc}
%b_{1} & -\frac{c_{1}+\gamma b_{2}}{2}\\
%-\frac{c_{1}+\gamma b_{2}}{2} & \gamma c_{2}
%\end{array}
%\right],\quad
B=B(x,t):=\left[
\begin{array}{cc}
u^{*}(d+\alpha v) & -\frac{\alpha (u^{*}v+v^{*}u )}{2}\\
-\frac{\alpha (u^{*}v+v^{*}u)}{2} & v^{*}(\gamma d+\alpha u)
\end{array}
\right]
$$
and
$A$ is the matrix with constant entries defined by \eqref{Adef}. 
In view of \eqref{lamdef}, 
we recall that $A$ is positive definite,
and therefore,
\begin{equation}\label{posiA}
\boldsymbol{X}A\boldsymbol{X}^{T}\ge \lambda^{*}
\{\,(u(x,t)-u^{*})^{2}+(v(x,t)-v^{*})^{2}\,\}
\quad\mbox{for all}\
(x,t)\in\Omega\times (0,\infty ).
\end{equation}
It follows from Theorem \ref{1stthm} and \eqref{Kdef} that
there exists $t_{0}=t_{0}(\|u_{0}+\gamma v_{0}\|_{L^{\infty}(\Omega )})>0$
such that if $t\ge t_{0}$, then
$$
\|u(\,\cdot\,,t)\|_{L^{\infty}(\Omega )}\le K
\quad\mbox{and}\quad
\|v(\,\cdot\,,t)\|_{L^{\infty}(\Omega )}\le \dfrac{\beta K}{\alpha },
$$
where $K$ is the positive number defined by \eqref{Kdef}.
It is noted that
\begin{equation}
\begin{split}
\det B&=u^{*}v^{*}(d+\alpha v)(\gamma d+\alpha u)-
\biggl(\dfrac{\alpha (u^{*}v+v^{*}u)}{2}\biggr)^{2}\\
&>
\gamma u^{*}v^{*}d^{2}-
\biggl(\dfrac{(\beta u^{*}+\alpha v^{*}) K}{2}\biggr)^{2}
\quad\mbox{for all}\ (x,t)\in\Omega\times [t_{0},\infty).
\end{split}
\nonumber
\end{equation}
Therefore, if
$$
d\ge \dfrac{(\beta u^{*}+\alpha v^{*}) K}{2\sqrt{\gamma u^{*}v^{*}}}
\,(\,=:\overline{d}\,),
$$
then
$B=B(x,t)$ is positive definite
for all $(x,t)\in\Omega\times [t_{0},\infty)$,
thereby
$$
\boldsymbol{Y}_{j}B\boldsymbol{Y}_{j}^{T}>0
\quad\mbox{for all}\ (x,t)\in\Omega\times [t_{0},\infty)
\quad
\mbox{and}\quad j=1,\ldots,N.
$$
By virtue of \eqref{dF} and \eqref{posiA}, we know that
if $d\ge\overline{d}$, then
$$
\dfrac{d}{dt}\mathcal{F}(t)+\lambda^{*}\mathcal{D}(t)\le 0
\quad\mbox{for all}\ t\in [t_{0},\infty).
$$
Then the proof of Lemma \ref{dFlem} is complete.
\end{proof}

\begin{proof}[Proof of Theorem \ref{2ndthm}]
By integrating \eqref{Ft} over $(t_{0},t)$, one can see
$$
\mathcal{F}(t)-\mathcal{F}(t_{0})+\lambda^{*}\int^{t}_{t_{0}}
\mathcal{D}(s)ds\le 0.
%\quad\mbox{for each}\ t\in (t_{0}, \infty).
$$
Since $\mathcal{F}(t)>0$ for all $t\in [t_{0}, \infty)$, then 
$$
\int^{t}_{t_{0}}\mathcal{D}(s)\,ds<
\dfrac{\mathcal{F}(t_{0})}{\lambda^{*}}
\quad\mbox{for all}\ t\in (t_{0}, \infty).
$$
Hence
we set $t\to\infty$
in the above inequality to get
\begin{equation}\label{intD}
\int^{\infty}_{t_0}
\mathcal{D}(t)\,dt\le
\dfrac{\mathcal{F}(t_{0})}{\lambda^{*}}<\infty.
\end{equation}
We show $\mathcal{D}(t)\to 0$ as $t\to\infty$.
Suppose for contradiction that
$\limsup_{t\to\infty}D(t)>0$.
Then there exist a constant $\varepsilon>0$ and 
a sequence $\{t_{n}\}$ with $\lim_{n\to\infty}t_{n}=\infty$
such that
$$
\mathcal{D}(t_{n})>\varepsilon\quad\mbox{for all}\ n\in\mathbb{N}.
$$
In view of Lemma \ref{Hollem},
we recall that the constant $\widehat{C}$ in the H\"older estimate
\eqref{HolC} can be taken independently $t\in (\tau, \infty)$.
(Note that $\widehat{T}_{{\rm max}}=\infty$ under the assumption
of Theorem \ref{2ndthm}.)
This fact implies that
$\mathcal{D}(t)$ is uniformly continuous for $t\in [t_{0},\infty)$.
Therefore, for the above $\varepsilon>0$,
there exists $\delta>0$ such that
$$
|t-t_{n}|<\delta\ \Longrightarrow\
|\mathcal{D}(t)-\mathcal{D}(t_{n})|<\dfrac{\varepsilon}{2}
\quad\mbox{for all}\ 
t\in [t_{0}, \infty )\quad\mbox{and}\quad n\in\mathbb{N}.
$$
Then it follows that
$$
\mathcal{D}(t)\ge \mathcal{D}(t_{n})-|\mathcal{D}(t_{n})-\mathcal{D}(t)|
\ge \varepsilon-\dfrac{\varepsilon}{2}=\dfrac{\varepsilon }{2}
$$
if $t\in (t_{n},t_{n}+\delta)$ with each $n\in\mathbb{N}$.
Therefore, we know that
$$
\int^{t_{n}+\delta}_{t_{n}}
\mathcal{D}(t)\,dt\ge
\int^{t_{n}+\delta}_{t_{n}}
\dfrac{\varepsilon}{2}\,dt=
\dfrac{\delta\varepsilon}{2}>0
\quad\mbox{for all}\ n\in\mathbb{N},
$$
whereas
\eqref{intD} obviously leads to
$$
\int^{t_{n}+\delta}_{t_{n}}\mathcal{D}(t)\,dt\to 0
\quad\mbox{as}\ n\to\infty.
$$
This contradiction enables us to conclude that
$\mathcal{D}(t)\to 0$ as $t\to\infty$, namely,
$$
\lim_{t\to\infty}
(u(\,\cdot\,,t), v(\,\cdot\,,t))=(u^{*}, v^{*})\quad
\mbox{in}\ L^{2}(\Omega )\times L^{2}(\Omega ).
$$
Then we complete the proof of Theorem \ref{2ndthm}.
\end{proof}

\section{Relationship between stationary and nonstationary solutions}
In this section, we discuss the relationship between 
the bifurcation structure of the nonconstant 
positive steady states obtained in \cite{AK} and 
the time-global nonstationary solutions obtained 
up to the previous section.
In \cite{AK},
Adachi and the second author of the present paper studied
the corresponding stationary problem in the weak cooperative case
$b_{1}c_{2}>b_{2}c_{1}$.
Among other things, they showed that
nonconstant positive steady states bifurcate
from the positive constant steady state $(u^{*}, v^{*})$
when either of (ii) or (iii) of \eqref{weakconst} 
(under $b_{1}c_{2}>b_{2}c_{1}$) is satisfied
in addition to some conditions.
In [1], $d_i$ $(i=1\ \mbox{or}\ 2)$ is adopted as the bifurcation parameter, 
but in order to relate the results of \cite{AK} 
to the properties of the nonstationary solutions of \eqref{para} 
obtained up to the previous section, 
we state the results of \cite{AK} for the stationary problem
with equal random diffusion rates.
\begin{equation}\label{SP}
\begin{cases}
d\Delta u+\alpha\nabla\cdot\biggl[
v^{2}\,\nabla\biggl(\dfrac{u}{v}\biggr)\biggr]
+u(a_{1}-b_{1}u+c_{1}v)=0,\ \ \ 
&x\in\Omega,\vspace{1mm} \\
d\Delta v +\beta\nabla\cdot\biggl[
u^{2}\,\nabla\biggl(\dfrac{v}{u}\biggr)\biggr]
+v(a_{2}+b_{2}u-c_{2}v)=0,\ \ \ 
&x\in\Omega, \vspace{1mm} \\
\dfrac{\partial u}{\partial\nu}=
\dfrac{\partial v}{\partial\nu}=0,\ \ \ 
&x\in\partial\Omega
\end{cases}
\end{equation}  
with bifurcation parameter $d$, as Theorem \ref{AKthm1} below.
Since results in cases (ii) and (iii) of \eqref{weakconst}  
can be described similarly, 
the result in case (ii) will be discussed.
In the bifurcation structure, 
the eigenvalue problem:
\begin{equation}\label{Eg}
-\Delta \phi=\lambda \phi\quad\mbox{in}\ \Omega,\qquad
\dfrac{\partial \phi}{\partial\nu}=0
\quad\mbox{on}\ \partial\Omega
\end{equation}
will play an important role.
We denote all the eigenvalues by
$$0=\lambda_{0}<\lambda_{1}\le\lambda_{2}\le\cdots\le\lambda_{j}\le\cdots$$
counting multiplicity and
denote by
$\varPhi_{j}$ any $L^{2}(\Omega )$ normalized eigenfunciton
corresponding to the eigenvalue $\lambda_{j}$.
\begin{thm}[\cite{AK}]\label{AKthm1}
Assume that
\begin{equation}\label{hinpu}
a_{2}<0<a_{1},\quad
\dfrac{c_{1}}{c_{2}}<\dfrac{b_{1}}{b_{2}}<\dfrac{a_{1}}{|a_{2}|}
\end{equation}
and
$$
P_{j}(\alpha, \beta):=
\lambda_{j}(\alpha |a_{2}|v^{*}-\beta a_{1}u^{*})-
(a_{1}c_{2}-|a_{2}|c_{1})v^{*}>0$$
for some $j\in\mathbb{N}$.
If $\lambda_{j}$ is a simple eigenvalue, then
the curve of nonconstant solutions of \eqref{SP}
bifurcates from $(u^{*}, v^{*})$ at
\begin{equation}\label{dj}
d=d^{(j)}_{*}(\alpha, \beta ):=\dfrac{1}{2\lambda_{j}}
\left(
\sqrt{Q_{j}^{2}+4P_{j}}
-Q_{j}
\right),
\end{equation}
where
$
Q_{j}(\alpha, \beta):=\lambda_{j}(\beta u^{*}+\alpha v^{*})+b_{1}u^{*}+
c_{1}v^{*}$.
There exist a neighborhood $\mathcal{U}_{j}
\subset\mathbb{R}\times W^{2,p}_{\nu}(\Omega )^{2}$ of
$(d^{(j)}_{*}, u^{*}, v^{*})$ and
a small positive number $\delta_{j}$ such that
all solutions of \eqref{SP}
(treating $d$ as a positive parameter) contained in $\mathcal{U}_{j}$
consist of the union of
$\{(d,u^{*},v^{*})\in\mathcal{U}_{j}\}$
%$\varGamma_{0}\cap\mathcal{U}_{j}$ 
and
a simple curve
\begin{equation}\label{Gammaj}
\varGamma_{j}\,:\,
\left[
\begin{array}{c}
d\\
u\\
v
\end{array}
\right]=
\left[
\begin{array}{l}
d^{(j)}_{*}(\alpha, \beta)\\
u^{*}\\
v^{*}
\end{array}
\right]
+
\left[
\begin{array}{l}
q(s)\\
s(\varPhi_{j}+\widetilde{u}(s))\\
s(\kappa_{j}\varPhi_{j}+\widetilde{v}(s))\\
\end{array}
\right]
\quad\mbox{for}\ s\in (-\delta_{j}, \delta_{j})
\end{equation}
with some $\kappa_{j}>0$, where
$(q(s), \widetilde{u}(s), \widetilde{v}(s))
\in\mathbb{R}\times W^{2,p}_{\nu}(\Omega )^{2}$
is continuously differentiable for $s\in (-\delta_{j}, \delta_{j})$
with
$(q(0), \widetilde{u}(0), \widetilde{v}(0))
=(0,0,0)$ and
$\int_{\Omega}\varPhi_{j}\widetilde{u}(s)=
\int_{\Omega}\varPhi_{j}\widetilde{v}(s)=0
$ for $s\in (-\delta_{j}, \delta_{j})$.
\end{thm}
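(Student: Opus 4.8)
The plan is to reduce the statement to a local bifurcation from a simple eigenvalue and invoke the Crandall--Rabinowitz theorem. Fix $p>N$, so that $W^{2,p}(\Omega)\hookrightarrow C^{1}(\overline{\Omega})$, and write $W^{2,p}_{\nu}(\Omega)$ for the subspace of $W^{2,p}(\Omega)$ with zero Neumann data. Using the identities \eqref{fd}, one rewrites \eqref{SP} as $F(d,u,v)=0$, where $F:(0,\infty)\times W^{2,p}_{\nu}(\Omega)^{2}\to L^{p}(\Omega)^{2}$ is given by
\[
F(d,u,v):=\left[\begin{array}{c}(d+\alpha v)\Delta u-\alpha u\Delta v+u(a_{1}-b_{1}u+c_{1}v)\\ (d+\beta u)\Delta v-\beta v\Delta u+v(a_{2}+b_{2}u-c_{2}v)\end{array}\right].
\]
For $p>N$ the multiplications $(v,\Delta u)\mapsto v\Delta u$, $(u,v)\mapsto uv$, etc., are bounded into $L^{p}(\Omega)$, so $F$ is a polynomial map between Banach spaces, hence real-analytic; moreover $F(d,u^{*},v^{*})\equiv0$, so the constant state provides the trivial branch through which the bifurcation is to occur.

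Next I would linearize along the trivial branch. Using $a_{1}-b_{1}u^{*}+c_{1}v^{*}=0$ and $a_{2}+b_{2}u^{*}-c_{2}v^{*}=0$ (which kill the term containing $\Delta u^{*}=\Delta v^{*}=0$ and reduce the reaction Jacobian), one gets
\[
\mathcal{L}(d)\!\left[\begin{array}{c}\phi\\ \psi\end{array}\right]:=D_{(u,v)}F(d,u^{*},v^{*})\!\left[\begin{array}{c}\phi\\ \psi\end{array}\right]=\left[\begin{array}{c}(d+\alpha v^{*})\Delta\phi-\alpha u^{*}\Delta\psi-b_{1}u^{*}\phi+c_{1}u^{*}\psi\\ (d+\beta u^{*})\Delta\psi-\beta v^{*}\Delta\phi+b_{2}v^{*}\phi-c_{2}v^{*}\psi\end{array}\right].
\]
Since the leading matrix $\left(\begin{smallmatrix}d+\alpha v^{*}&-\alpha u^{*}\\ -\beta v^{*}&d+\beta u^{*}\end{smallmatrix}\right)$ is invertible with eigenvalues of positive real part, $\mathcal{L}(d)$ is an elliptic system with Neumann conditions, hence a Fredholm operator of index zero from $W^{2,p}_{\nu}(\Omega)^{2}$ to $L^{p}(\Omega)^{2}$ (it is a compact perturbation of an isomorphism). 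Expanding in the Neumann eigenfunctions $\{\varPhi_{k}\}$, $\mathcal{L}(d)$ acts on the $k$-th mode through
\[
M_{k}(d):=\left[\begin{array}{cc}-(d+\alpha v^{*})\lambda_{k}-b_{1}u^{*}&\alpha u^{*}\lambda_{k}+c_{1}u^{*}\\ \beta v^{*}\lambda_{k}+b_{2}v^{*}&-(d+\beta u^{*})\lambda_{k}-c_{2}v^{*}\end{array}\right],
\]
and a short computation, in which the constant term is simplified using $b_{1}u^{*}-c_{1}v^{*}=a_{1}$, $c_{2}v^{*}-b_{2}u^{*}=a_{2}$ and \eqref{const}, gives $\det M_{k}(d)=(d\lambda_{k})^{2}+Q_{k}(\alpha,\beta)\,d\lambda_{k}-P_{k}(\alpha,\beta)$. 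Thus $\ker\mathcal{L}(d)\neq\{0\}$ iff $\det M_{k}(d)=0$ for some $k$, and for a mode with $\lambda_{k}>0$ this quadratic in $d$ has a positive root precisely when $P_{k}>0$, namely $d=d^{(k)}_{*}(\alpha,\beta)$ as in \eqref{dj}.

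Now fix the index $j$ from the hypotheses and set $d_{*}:=d^{(j)}_{*}$. Because $\lambda_{j}$ is simple and $M_{j}(d_{*})$ is a nonzero matrix, the $j$-th mode contributes a one-dimensional piece to $\ker\mathcal{L}(d_{*})$, spanned by $(\varPhi_{j},\kappa_{j}\varPhi_{j})$ with $\kappa_{j}=\bigl((d_{*}+\alpha v^{*})\lambda_{j}+b_{1}u^{*}\bigr)\big/\bigl((\alpha\lambda_{j}+c_{1})u^{*}\bigr)>0$; under the stated conditions one checks that $\det M_{k}(d_{*})\neq0$ for every other $k$, so $\dim\ker\mathcal{L}(d_{*})=1$ and, by the Fredholm property, $\operatorname{codim}\operatorname{Range}\mathcal{L}(d_{*})=1$. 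Computing the $L^{2}$-adjoint (under Neumann conditions $\Delta$ is self-adjoint, so the leading and the zeroth-order matrices get transposed) identifies $\operatorname{Range}\mathcal{L}(d_{*})=\{(g_{1},g_{2}):\int_{\Omega}(\zeta_{1}g_{1}+\zeta_{2}g_{2})\varPhi_{j}=0\}$, where $(\zeta_{1},\zeta_{2})$ spans $\ker M_{j}(d_{*})^{T}$. For the transversality condition, since $\partial_{d}\mathcal{L}(d)=\operatorname{diag}(\Delta,\Delta)$ we have $\partial_{d}\mathcal{L}(d_{*})(\varPhi_{j},\kappa_{j}\varPhi_{j})=(-\lambda_{j}\varPhi_{j},-\lambda_{j}\kappa_{j}\varPhi_{j})$, and pairing with $(\zeta_{1},\zeta_{2})\varPhi_{j}$ produces $-\lambda_{j}(\zeta_{1}+\kappa_{j}\zeta_{2})$; as $d_{*}\lambda_{j}$ is a \emph{simple} root of $D\mapsto D^{2}+Q_{j}D-P_{j}$ (the discriminant $Q_{j}^{2}+4P_{j}$ is positive because $P_{j}>0$), $\tfrac{d}{dd}\det M_{j}(d_{*})\neq0$, which is exactly $\zeta_{1}+\kappa_{j}\zeta_{2}\neq0$, so $\partial_{d}\mathcal{L}(d_{*})(\varPhi_{j},\kappa_{j}\varPhi_{j})\notin\operatorname{Range}\mathcal{L}(d_{*})$. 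The Crandall--Rabinowitz theorem then yields the neighbourhood $\mathcal{U}_{j}$, the local description ``trivial branch $\cup\ \varGamma_{j}$'', and the smooth curve $\varGamma_{j}$ in the form \eqref{Gammaj} (the normalization $\int_{\Omega}\varPhi_{j}\widetilde{u}(s)=\int_{\Omega}\varPhi_{j}\widetilde{v}(s)=0$ being the choice of a topological complement of $\ker\mathcal{L}(d_{*})$, and the regularity of $(q,\widetilde{u},\widetilde{v})$ following from that of $F$); finally, for $0<|s|<\delta_{j}$ the solution $u=u^{*}+s\varPhi_{j}+o(s)$ is nonconstant since $\varPhi_{j}$ is (as $\lambda_{j}>0$).

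The step I expect to be the real obstacle is the one-dimensionality of $\ker\mathcal{L}(d_{*})$: one must rule out that the value $d^{(j)}_{*}$ is simultaneously a zero of $\det M_{k}(\,\cdot\,)$ for some other mode $\varPhi_{k}$. Simplicity of $\lambda_{j}$ settles the $j$-th eigenspace itself, but separating $d^{(j)}_{*}$ from the other candidate bifurcation values $d^{(k)}_{*}$ is precisely what forces the extra hypotheses on $(\alpha,\beta)$ in the statement and makes the explicit bookkeeping of $P_{k}$, $Q_{k}$ through \eqref{const} unavoidable. Everything else---analyticity of $F$, the Fredholm property of $\mathcal{L}(d)$, the range characterization via the adjoint, and the one-line transversality computation---is routine once the algebra of $M_{k}(d)$ is in hand.
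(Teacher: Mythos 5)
First, a caveat: the present paper does not prove Theorem \ref{AKthm1} at all --- it is quoted verbatim from \cite{AK} --- so there is no in-paper proof to compare against. Judged on its own terms, your Crandall--Rabinowitz strategy is the standard (and surely the intended) route: the rewriting of \eqref{SP} via \eqref{fd} as $F(d,u,v)=0$, the linearization $\mathcal{L}(d)$ at $(u^{*},v^{*})$, the mode matrices $M_{k}(d)$, the identification $\det M_{k}(d)=(d\lambda_{k})^{2}+Q_{k}\,d\lambda_{k}-P_{k}$ (the constant term does reduce to $-P_{k}$ using $b_{1}u^{*}-c_{1}v^{*}=a_{1}$, $c_{2}v^{*}-b_{2}u^{*}=a_{2}$ and \eqref{const}), the sign analysis showing a positive root exists iff $P_{k}>0$, the kernel vector $(\varPhi_{j},\kappa_{j}\varPhi_{j})$ with $\kappa_{j}>0$, and the transversality computation via $\partial_{d}\mathcal{L}=\operatorname{diag}(\Delta,\Delta)$ and the simplicity of the root $d_{*}^{(j)}\lambda_{j}$ of $D^{2}+Q_{j}D-P_{j}$ are all correct. (One bookkeeping remark: expanding $\det M_{k}$ directly gives the $d\lambda_{k}$-coefficient $\lambda_{k}(\beta u^{*}+\alpha v^{*})+b_{1}u^{*}+c_{2}v^{*}$, whereas the statement has $c_{1}v^{*}$ in $Q_{j}$; you should resolve which is correct rather than assert agreement with the quoted formula.)

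The genuine gap is the one you yourself flag and then wave through with ``under the stated conditions one checks that $\det M_{k}(d_{*})\neq 0$ for every other $k$.'' This is \emph{not} a consequence of the stated hypotheses. Simplicity of $\lambda_{j}$ only disposes of other eigenfunctions with the same eigenvalue; it does not prevent $d^{(j)}_{*}(\alpha,\beta)=d^{(k)}_{*}(\alpha,\beta)$ for some $k$ with $\lambda_{k}\neq\lambda_{j}$ and $P_{k}>0$, in which case $\dim\ker\mathcal{L}(d^{(j)}_{*})\ge 2$ and Crandall--Rabinowitz does not apply. Since each $d^{(k)}_{*}$ is determined by a fixed quadratic in $d\lambda_{k}$, coincidences of this type occur on a nontrivial set of parameters $(\alpha,\beta,a_{i},b_{i},c_{i},\lambda_{k})$, so the claim needs either an explicit nondegeneracy hypothesis ($d^{(j)}_{*}\neq d^{(k)}_{*}$ for all $k\neq j$, which is presumably implicit in \cite{AK}) or an argument showing the hypotheses \eqref{hinpu} and $P_{j}>0$ already exclude it. As written, your proof establishes bifurcation only under that additional separation condition; everything else (Fredholm property, range characterization via the adjoint, the normalization giving $\int_{\Omega}\varPhi_{j}\widetilde{u}(s)=\int_{\Omega}\varPhi_{j}\widetilde{v}(s)=0$, and nonconstancy of the bifurcating solutions from $\lambda_{j}>0$) is sound.
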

%%%%%%%%%%%%%%%%%%%%%%%%%%%%%%%%%%%%%%%%%%%%%%%%%%%%%%%%%%%%%%%%%%%%%%%%%%%%%%%
\begin{figure}
\begin{center}
{\includegraphics*[scale=.5]{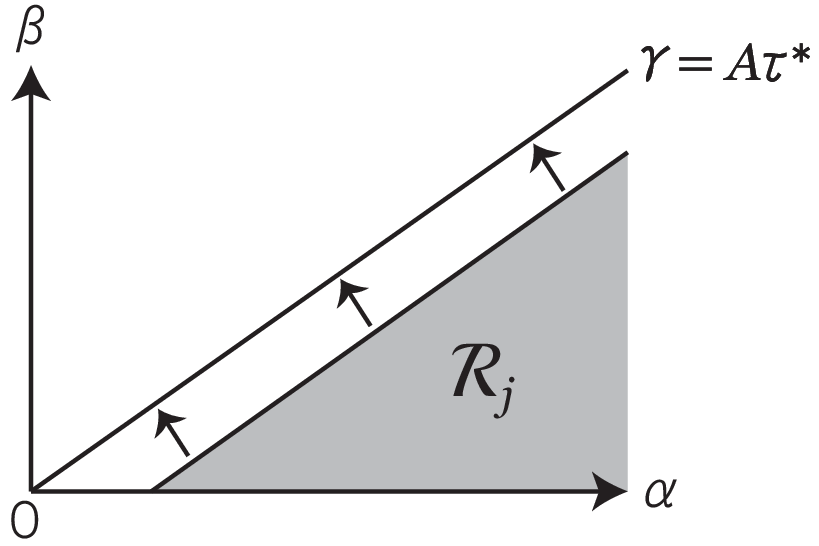}}\\
\caption{$\mathcal{R}_{j}$ and $\gamma =A\tau^{*}$ on the $\alpha\beta$ plane}
\end{center}
\end{figure}
%%%%%%%%%%%%%%%%%%%%%%%%%%%%%%%%%%%%%%%%%%%%%%%%%%%%%%%%%%%%%%%%%%%%%%%%%%%%%%%
From the viewpoint of the effect of the strongly coupled
diffusion terms on the bifurcation structure,
one can see that
if 
$$
(\alpha, \beta)\in\bigcup^{\infty}_{j=1}\mathcal{R}_{j},
\quad\mbox{where}\ \
\mathcal{R}_{j}:=
\{\,(\alpha, \beta )\in\mathbb{R}^{2}_{>0}\,:\,
P_{j}(\alpha, \beta )>0\,\},
$$
then there appear infinitely many bifurcation points
$(d_{*}^{(j)},u^{*}, v^{*})$ with 
$$
j\ge k:=\min\{\,j\in\mathbb{N}\,:\,
(\alpha, \beta)\in\mathcal{R}_{j}\,\}$$
such that $\lambda_{j}$ is a simple eigenvalue of
\eqref{Eg}.
Indeed, it is possible to check that 
$\mathcal{R}_{j}$ expands to the positive cone below the
line 
$$\gamma=A\tau^{*},\qquad\mbox{where}\ \ 
A=\dfrac{a_{1}}{|a_{2}|},\quad
\tau^{*}=\dfrac{u^{*}}{v^{*}},
$$
as $j\to\infty$ on the $\alpha\beta$ plane (see Figure 1).
Furthermore, from \eqref{dj},
one can verify that
$d^{(j)}_{*}(\alpha, \beta )\to 0$ as
$j\to\infty$ if $(\alpha, \beta)\in\cup^{\infty}_{j=1}
\mathcal{R}_{j}$.
This fact implies that $(\alpha, \beta )\in\cup^{\infty}_{j=1}
\mathcal{R}_{j}$ can produce various stationary patterns
when equal random diffusion $d$ is small enough.
In Figure 2, 
when $(a_{1}, a_{2}, b_{1}, b_{2}, c_{1}, c_{2})=
(1, -1, 4, 5, 2, 3)$
and 
$(\alpha, \beta )=(2,1)$, 
the numerical bifurcation diagram by
the continuation software pde2path (\cite{UWR})
is exhibited, where the horizontal axis represents $d$,
and the vertical axis represents $\|u\|_{L^{2}(\Omega )}$
(with $s\in (0,\delta_{j})$ in \eqref{Gammaj}).
The bifurcation point $d_{*}^{(j)}$ and 
the bifurcation branches $\varGamma_{j}$ of Theorem 5.1 can be 
observed within the numerically traceable range.
%%%%%%%%%%%%%%%%%%%%%%%%%%%%%%%%%%%%%%%%%%%%%%%%%%%%%%%%%%%%%%%%%%%%%%%%%%%%%%%
\begin{figure}
%[H]
\begin{center}
{\includegraphics*[scale=.6]{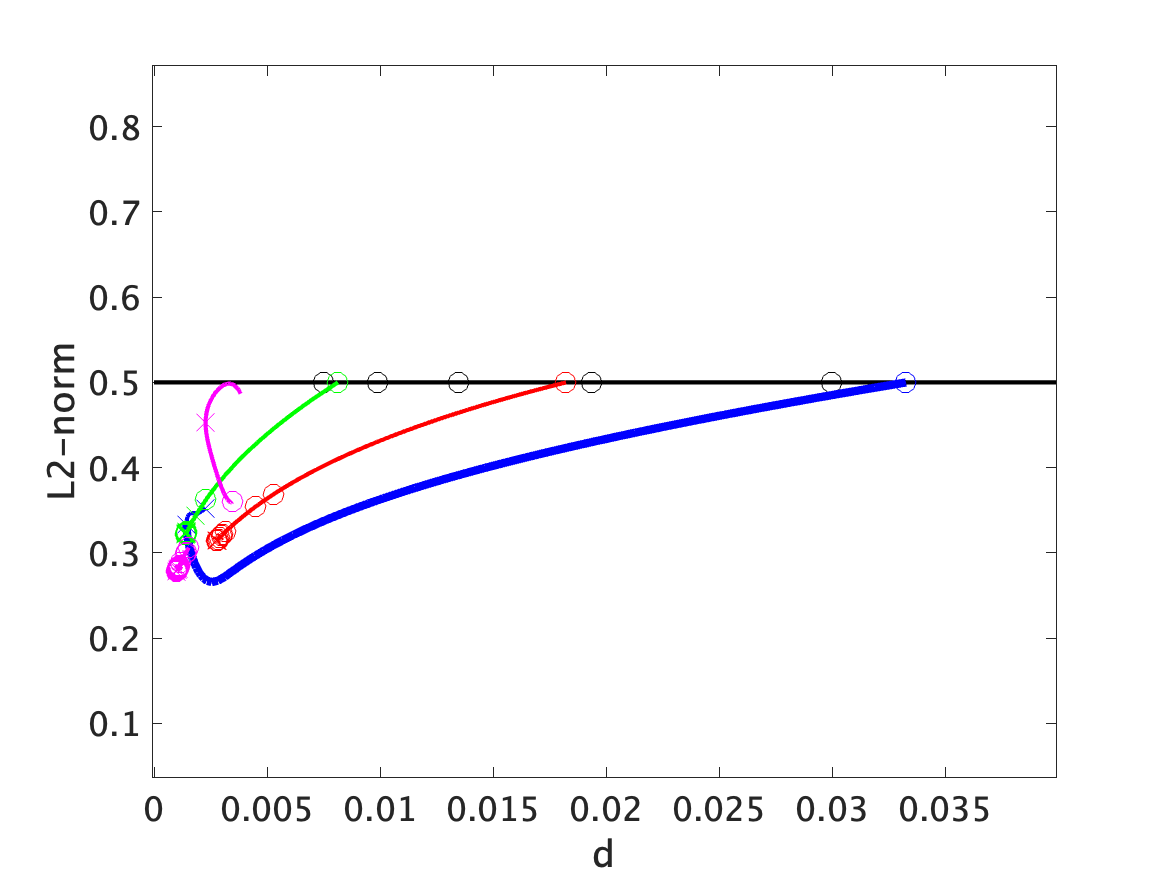}}\\
\caption{
Bifurcation branches of solutions to \eqref{SP} with bifurcation parameter $d$
in case $(\alpha, \beta, a_{1}, a_{2}, b_{1}, b_{2}, c_{1},c_{2})= 
(2,1,1,-1,4,5,2,3)$}
\end{center}
\end{figure}
%%%%%%%%%%%%%%%%%%%%%%%%%%%%%%%%%%%%%%%%%%%%%%%%%%%%%%%%%%%%%%%%%%%%%%%%%%%%%%%

According to \cite{AK},
in the nonlinear diffusion limit where
$\alpha$ and $\beta$ tend to infinity
keeping $(\alpha, \beta)\in\cup^{\infty}_{j=1}
\mathcal{R}_{j}$,
the positive steady states approach a positive steady state
to the scalar field equation as follows:
\begin{thm}[\cite{AK}]
Assume \eqref{hinpu} and $N\le 3$.
Let $\{(\alpha_{n}, \beta_{n})\}$ 
be any positive sequence satisfying
$\alpha_{n}\to\infty$,
$\beta_{n}\to\infty$
and
$\alpha_{n}/\beta_{n}\to\gamma$
with some $\gamma>A\tau^{*}$.
Let $\{(u_{n}, v_{n})\}$ be any sequence of positive nonconstant
solutions of \eqref{SP} with $(\alpha, \beta )=
(\alpha_{n}, 
\beta_{n})$.
Then there exists a positive function $v\in C^{2}(\overline{\Omega })$
such that
$$
\lim_{n\to\infty}(u_{n}, v_{n})=(\tau^{*}, 1)v
\quad\mbox{in}\ C^{1}(\overline{\Omega })\times
C^{1}(\overline{\Omega }),
$$
passing to a subsequence if necessary.
Furthermore, $v$ satisfies
\begin{equation}\label{sf}
\begin{cases}
d\Delta v+\xi^{*}v(v-v^{*})=0,
\qquad&x\in\Omega,\\
\dfrac{\partial v}{\partial\nu}=0,
\qquad&x\in\partial\Omega,
\end{cases}
\end{equation}
where
$$
\xi^{*}=\dfrac{\gamma |a_{2}|-\tau^{*}a_{1}}{u^{*}+\gamma v^{*}}.
$$
\end{thm}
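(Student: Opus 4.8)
The plan is to reduce the system to scalar estimates via $w_n:=u_n+\gamma_n v_n$ with $\gamma_n:=\alpha_n/\beta_n$, to establish a priori bounds on $(u_n,v_n)$ uniform in $n$ (including a uniform positive lower bound), to extract a $C^1(\overline\Omega)$-convergent subsequence, and then to identify the limit by passing to the limit in two of the equations together with the integral identities obtained from integrating \eqref{SP} over $\Omega$. First, adding $\gamma_n$ times the second equation of \eqref{SP} to the first shows that $w_n$ solves $d\Delta w_n+g_n=0$ in $\Omega$ with $\partial w_n/\partial\nu=0$ on $\partial\Omega$, where $g_n:=u_n(a_1-b_1u_n+c_1v_n)+\gamma_n v_n(a_2+b_2u_n-c_2v_n)$. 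Granting the bound $\|u_n\|_{L^\infty(\Omega)}+\|v_n\|_{L^\infty(\Omega)}\le C$ uniform in $n$ (established in \cite{AK}; this is where $N\le 3$ is used), one gets $\|g_n\|_{L^\infty(\Omega)}\le C$, so elliptic regularity gives $w_n$ bounded in $W^{2,p}(\Omega)$ for every $p$, hence in $C^{1,\theta}(\overline\Omega)$. Integrating the first equation of \eqref{SP} yields $b_1\int_\Omega u_n^2=a_1\int_\Omega u_n+c_1\int_\Omega u_nv_n\ge a_1\int_\Omega u_n$, whence $\|u_n\|_{L^\infty(\Omega)}\ge a_1/b_1$; writing the $w_n$-equation as $\Delta w_n+\frac{g_n}{dw_n}w_n=0$ (bounded coefficient) and applying Harnack's inequality then gives $w_n\ge\delta_0>0$ on $\overline\Omega$ uniformly in $n$. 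Substituting $\Delta w_n=-g_n/d$, $\Delta u_n=\Delta w_n-\gamma_n\Delta v_n$ and $\beta_n u_n+\alpha_n v_n=\beta_n w_n$ into the second equation of \eqref{SP} gives
\[
(d+\beta_n w_n)\Delta v_n+\frac{\beta_n v_n g_n}{d}+v_n(a_2+b_2u_n-c_2v_n)=0,
\]
and since $d+\beta_n w_n\ge\beta_n\delta_0$, the right-hand side is bounded in $L^\infty(\Omega)$, so $\|\Delta v_n\|_{L^\infty(\Omega)}\le C$; hence $v_n$, and then $u_n=w_n-\gamma_n v_n$, is bounded in $C^{1,\theta}(\overline\Omega)$.

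Passing to a subsequence, $u_n\to u_\infty$, $v_n\to v_\infty$ and $w_n\to w_\infty=u_\infty+\gamma v_\infty$ in $C^1(\overline\Omega)$, with $u_\infty,v_\infty\ge0$ and $w_\infty\ge\delta_0$. The next step is to show $u_\infty,v_\infty>0$ on $\overline\Omega$. Since $\|u_n\|_{L^\infty(\Omega)}\ge a_1/b_1$ we have $u_\infty\not\equiv0$. If $v_\infty\equiv0$, then in the limit $u_\infty$ solves $d\Delta u_\infty+u_\infty(a_1-b_1u_\infty)=0$ with homogeneous Neumann data, so by the maximum principle $u_\infty\equiv a_1/b_1$; but then dividing $\int_\Omega v_n(a_2+b_2u_n-c_2v_n)=0$ by $\int_\Omega v_n>0$ and letting $n\to\infty$ forces $a_2+b_2(a_1/b_1)=0$, i.e.\ $b_1/b_2=a_1/|a_2|$, contradicting \eqref{hinpu}. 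Hence $v_\infty\not\equiv0$, and the strong maximum principle (with Hopf) applied to the limiting elliptic equations satisfied by $u_\infty$ and $v_\infty$ gives $u_\infty,v_\infty>0$ on $\overline\Omega$; elliptic regularity upgrades them to $C^2(\overline\Omega)$.

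Finally, to identify the limit: writing the first equation of \eqref{SP} as $\alpha_n\nabla\cdot(v_n\nabla u_n-u_n\nabla v_n)=-d\Delta u_n-u_n(a_1-b_1u_n+c_1v_n)$, the right-hand side is bounded in $L^\infty(\Omega)$, so $\nabla\cdot(v_n\nabla u_n-u_n\nabla v_n)\to0$; since $v_n\nabla u_n-u_n\nabla v_n\to v_\infty\nabla u_\infty-u_\infty\nabla v_\infty$ in $C^0(\overline\Omega)$ and this field has vanishing normal trace on $\partial\Omega$, integrating $\nabla\cdot(v_\infty\nabla u_\infty-u_\infty\nabla v_\infty)=0$ against $u_\infty/v_\infty\in C^1(\overline\Omega)$ gives $\int_\Omega v_\infty^2|\nabla(u_\infty/v_\infty)|^2=0$, so $u_\infty=\tau v_\infty$ for a constant $\tau>0$. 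Integrating both equations of \eqref{SP} over $\Omega$ and passing to the limit gives $a_1\int_\Omega v_\infty+(c_1-b_1\tau)\int_\Omega v_\infty^2=0$ and $a_2\int_\Omega v_\infty+(b_2\tau-c_2)\int_\Omega v_\infty^2=0$; eliminating $\int_\Omega v_\infty^2/\int_\Omega v_\infty$ yields $\tau=\dfrac{a_1c_2-|a_2|c_1}{a_1b_2-|a_2|b_1}=u^*/v^*=\tau^*$. Dividing the displayed $v_n$-equation by $\beta_n$ and letting $n\to\infty$ gives $w_\infty\Delta v_\infty+v_\infty g_\infty/d=0$, i.e.\ $d(\tau^*+\gamma)\Delta v_\infty+g_\infty=0$ because $w_\infty=(\tau^*+\gamma)v_\infty$; evaluating $g_\infty$ at $(u_\infty,v_\infty)=(\tau^*v_\infty,v_\infty)$ and simplifying by means of $a_1-b_1u^*+c_1v^*=0=a_2+b_2u^*-c_2v^*$ identifies this equation with $d\Delta v_\infty+\xi^*v_\infty(v_\infty-v^*)=0$, that is \eqref{sf}. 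Setting $v:=v_\infty$, which is positive and inherits the Neumann condition, finishes the argument; the limit genuinely depends on the subsequence, since \eqref{sf} may possess several positive solutions.

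I expect the main obstacle to be the a priori estimate uniform in $(\alpha_n,\beta_n)$, and within it the uniform positive lower bound $w_n\ge\delta_0$ (the $L^\infty$ upper bound being available from \cite{AK}); once these are secured, the remainder is a compactness argument together with the elementary algebra behind the identification of $\tau^*$ and $\xi^*$.
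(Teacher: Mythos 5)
First, a point of comparison: the paper does not prove this theorem. It is Theorem 5.2, imported verbatim from \cite{AK} as background for the discussion in Section 5, so there is no in-paper argument to measure your proposal against; I can only assess it against the standard way such nonlinear-diffusion limits are established (which is indeed the route taken in \cite{AK} and its predecessors \cite{OK, KO}).

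On that basis your outline is essentially the right one, and the algebra identifying $\tau^{*}$ and $\xi^{*}$ is correct. Three points deserve care. (i) The entire argument hinges on the uniform-in-$(\alpha_{n},\beta_{n})$ bound $\|u_{n}\|_{L^{\infty}(\Omega)}+\|v_{n}\|_{L^{\infty}(\Omega)}\le C$, which you explicitly outsource to \cite{AK}. That estimate is the substantive analytic content of the theorem and the only place $N\le 3$ enters; a proof that assumes it has assumed the hard part. (ii) In the Harnack step you call $g_{n}/(dw_{n})$ a ``bounded coefficient'' without saying why; the justification is that $|g_{n}|\le C(u_{n}+\gamma_{n}v_{n})=Cw_{n}$ once the $L^{\infty}$ bound is in hand and $\gamma_{n}\to\gamma>0$, so the coefficient is bounded independently of how small $w_{n}$ might be --- this should be stated, since otherwise the step looks circular. (iii) To invoke the strong maximum principle for $u_{\infty}$ and $v_{\infty}$ separately you need the individual limiting equations; these follow from the same substitution you perform for $v_{n}$, namely $(d+\beta_{n}w_{n})\Delta u_{n}+\beta_{n}u_{n}g_{n}/d+u_{n}(a_{1}-b_{1}u_{n}+c_{1}v_{n})=0$, divided by $\beta_{n}$, giving $w_{\infty}\Delta u_{\infty}+u_{\infty}g_{\infty}/d=0$ with $w_{\infty}\ge\delta_{0}$; as written, the claim is asserted rather than derived. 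Note also that the product $w_{n}\Delta v_{n}$ only converges weakly in $L^{p}$ (strong times weak), which suffices but should be said. With these repairs the sketch is a faithful reconstruction of the cited result, not a replacement for its proof.
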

It is well-known that the set of positive solutions
to the stationary scalar field equation such as \eqref{sf}
with small $d>0$ is very complicated from various viewpoints,
e.g. multiplicity, profiles and the bifurcation structure.
Naturally it can be expected that the spatio-temporal structure
of \eqref{para} with small equal random diffusion rates and the
weak cooperative setting is very rich
from the dynamical viewpoint
involving the bifurcation theory,
such a complicated dynamical picture of the strongly coupled
diffusion system is in stark contrast with that of
the weak cooperative linear diffusion system
($\alpha=\beta =0$ and $b_{1}c_{2}>b_{2}c_{1}$),
where $(u^{*}, v^{*})$ is globally asymptotically stable
attracting all the positive nonstationary solutions
as $t\to\infty$ regardless of the degree of 
the random diffusion rates.

Here in view of Theorem \ref{2ndthm},
we recall that if the equal random diffusion $d$ is sufficiently
large, then all positive solutions of \eqref{para} with
the weak cooperative setting tend to $(u^{*}, v^{*})$
as $t\to\infty$.
This scenario is same to the weak cooperative linear diffusion 
system.
Consequently, 
for the global dynamics of the weakly cooperative strongly coupled diffusion system \eqref{para}, we find that if 
the equal random diffusion $d$ is sufficiently large, 
all positive nonstationary solutions asymptotically approach
$(u^{*}, v^{*})$, 
as is the case for the corresponding linear diffusion system, 
whereas if $d$ is sufficiently small, 
a variety of dynamics can be expected where 
a lot of positive nonconstant steady states exist.

%%%%%%%%%%%%%%%%%%%%%%%%%%%%%%%%%%%%%%%%%%%%%%%%%%%%%%%%%%%%%%%%%%%%%%%%%%%%

\end{document}